\newtheorem{thm}{Theorem}[section]
\newtheorem{lem}[thm]{Lemma}
\newtheorem{prop}[thm]{Proposition}
\newtheorem{col}[thm]{Corollary}
\theoremstyle{definition}
\newtheorem{defn}[thm]{Definition}
\newtheorem{rem}[thm]{Remark}
\newcommand{\nempty}{\not=\emptyset}
\newcommand{\B}[1]{\ensuremath{\mathbb{#1}}}
\newcommand{\N}{\B{N}}
\newcommand{\C}[1]{\mathcal{#1}}
\newcommand{\al}{\alpha}
\renewcommand{\include}{\input}
\newcommand{\w}{\omega}
\newcommand{\sh}{\sigma}
\newcommand{\eps}{\varepsilon}
\newcommand{\nat}{\mathbb{N}}
\newcommand{\restr}[2]{#1\hspace{-0.15cm}\restriction_{#2}}
\newcommand{\rtr}{\hspace{-0.15cm}\restriction}
\newcommand{\app}{\approx}
\title{The $\omega$-limit sets of quadratic Julia sets}
\author[A. D. Barwell]{Andrew D. Barwell}
\address[A. D. Barwell]{School of Mathematics, University of Bristol, Howard House, Queens Avenue, Bristol, BS8 1SN, UK -- and -- School of Mathematics, University of Birmingham, Birmingham, B15 2TT, UK}
\email[A. D. Barwell]{A.Barwell@bristol.ac.uk}
\author[B. E. Raines]{Brian E. Raines}
\address[B. E. Raines]{Department of Mathematics, Baylor University, Waco, TX 76798--7328,USA}
\email[B. E. Raines]{brian\_raines@baylor.edu}
\begin{document}

\begin{abstract}
  In this paper we characterize $\w$-limit sets of dendritic Julia sets for quadratic maps.  We use Baldwin's symbolic representation of these spaces as a non-Hausdorff itinerary space and prove that quadratic maps with dendritic Julia sets have shadowing, and also that for all such maps, a closed invariant set is an $\w$-limit set of a point if, and only if, it is internally chain transitive.
\end{abstract}

\maketitle

\section{Introduction}

In this paper we consider the dynamics of quadratic maps on the complex plane, $f_c(z)=z^2+c$.  The interesting dynamics of such a map are carried on the Julia set which is often a strange self-similar space.  There are many values of $c$ for which the Julia set is a \emph{dendrite},  a locally connected and uniquely arcwise connected compact metric space.  One example is when the value $c$ is strictly pre-periodic; such values are known as Misiurewicz points and the corresponding Misiurewicz maps are well studied (see for example \cite{AlsedaFagella, D-H, Schleicher}).

Baldwin gives an efficient encoding of the dynamics of these quadratic maps restricted to the Julia set, into a shift map on a non-Hausdorff itinerary space \cite{Baldwin-JFPTA}.  A detailed description of all such itinerary spaces is given in \cite{Baldwin-TopApp}, and an illustration of the utility of non-Hausdorff itinerary spaces in analyzing dynamical systems is given in \cite{Baldwin-Fund}.  Baldwin's collection of itinerary spaces is a large family of dendrites and maps that includes all of the dendritic Julia sets for quadratic maps.  It also includes maps on self-similar dendrites which are not Julia sets.  Our $\w$-limit set characterization theorem applies to both the Julia set dendrites and the dendrite maps which are not Julia sets.

\medskip
A set $\Lambda$ is \emph{internally chain transitive (ICT)} provided for every $\eps>0$ and every pair $x,y\in \Lambda$ there is an \emph{$\eps$ pseudo-orbit} from $x$ to $y$ in $\Lambda$; in other words a collection $\{x=x_0, x_1, \dots x_n=y\}\subseteq \Lambda$ such that $d(f(x_i), x_{i+1})<\eps$ for $0\le i\le n-1$.  In the family of compact metric spaces this property is equivalent with Sarkovskii's property of weak incompressibility, \cite{BGOR-preprint}.  We say a set $M$ is \emph{weakly incompressible} provided for every nonempty closed proper subset $K$ of $M$ we have \[K\cap \overline{f(M-K)}\nempty.\]  Sarkovskii proved that every $\w$-limit set has weak incompressibility, and so every $\w$-limit set is ICT (a fact also proved in \cite{Hirsch}). In this paper we consider the question of the converse: \emph{`Is every ICT set an $\w$-limit set?'}

Let $f:X\rightarrow X$ on a metric space $(X,d)$ be continuous.  A \emph{$\delta$ pseudo-orbit} is a finite or infinite ordered set $\{x_i\}$  such that $d(f(x_i), x_{i+1})<\delta$ for all $i$.  We say the map $f$ has \emph{shadowing} if for every $\eps>0$ there is a $\delta>0$ such that for every $\delta$ pseudo-orbit $\{x_i\}$ there is a point $z\in X$ for which $d(f^i(z),x_i)<\eps$ for all $i$; we say that $z$ \emph{$\eps$-shadows} the pseudo-orbit. In \cite{BDG-tentmaps} the authors conjecture that the answer to the question \emph{`Is every ICT set an $\w$-limit set?'} is `yes' for any map which has shadowing. The conjecture is supported by a result that shows for every tent map with a periodic critical point (maps which have shadowing) the answer is `yes', and by an example showing there are tent maps without shadowing for which the answer is `no'. Furthermore, the authors prove in \cite{BGKR-omega} that the answer is `yes' in the class of shifts of finite type (SFTs), a class of map which has shadowing \cite{Walters}, but the answer is `no' in the class of sofic shifts (which do not have shadowing in general).  There are maps of the unit interval for which the answer is `yes' and others for which the answer is `no', but for which shadowing is neither proved nor disproved \cite{Barwell-Fund}.  Through the use of Baldwin's encoding, we demonstrate that the answer is `yes' for the class of dendritic Julia sets of quadratic maps, and that these maps have shadowing, further supporting the above conjecture.  In fact we prove these results for all self-similar dendrite maps with the unique itinerary property --  a large class of maps containing the dendritic Julia set maps. 

\medskip
The paper is organized as follows.  In the next section we present the preliminary definitions and results for dendrite maps.  In Section Three we prove lemmas regarding the structure and properties of pseudo-orbits in these spaces, and in Section Four we prove shadowing for dendritic Julia sets (Theorem \ref{thm:canonical_shadowing}), from which we obtain the proof of the main theorem, which states that for dendritic Julia sets, every ICT set is necessarily an $\omega$-limit set (Theorem \ref{thm:can_shad_ICT_property}).


\section{Preliminaries}

We examine the dynamics of quadratic Julia sets that are dendrites via Baldwin's encoding using non-Hausdorff itinerary spaces.  In this section we give a brief description of the definitions and results from \cite{Baldwin-JFPTA} that relate to dendritic Julia sets.  More detail and results can be found also in \cite{Baldwin-TopApp}.

Given a finite set of symbols, $A$, for each $n\in \N$ we denote the words of length $n$ from $A$ by $A^n$, and we define the set of finite words from $A$ by $$A^{<\w}=\bigcup_{n\in \N} A^n.$$  For a finite word $\al\in A^{<\w}$ let $len(\al)\in \N$ be the length of $\al$.

It is a standard practice in dynamics to encode complicated behavior via symbolic dynamics.  A typical simple encoding is to assign symbols from an alphabet to disjoint regions and then to track orbits by recording the regions the points traverse.  A difficulty arises due to the fact that the sequence space with its natural topology is totally disconnected while the dynamical system under consideration is usually not.  Therefore the encoding usually induces a semiconjugacy rather than a conjugacy.  Baldwin avoids this difficulty by the use of ``wildcard'' symbols. His encoding of the symbol space includes a $*$ symbol which stands for all of the other symbols at once.  The topology then on his set of symbols is not the discrete topology, but rather a slightly more complicated non-Hausdorff topology.

Consider the set $\{0,1,*\}$ with the non-Hausdorff topology with basis $\{\{0\}, \{1\}, \{0,1,*\}\}$.  Let $\Lambda$ be the product space induced by this topology on $\{0,1,*\}^\w$ ($\w$ is the set of non-negative integers, $\{0\}\cup \N$.)  For each $\al\in \{0,1,*\}^{<\w}$, we can define the basic open cylinder sets as
$$
B^\Lambda_\al\ :=\ \bigl\{\beta\in\Lambda\ :\ \beta_i=\alpha_i\mbox{ whenever }\alpha_i\neq*\mbox{, for }i\leq len(\alpha)\bigr\}.
$$

Notice that with these definitions, in the factor spaces $*$ cannot be separated via open sets from $1$ and $0$, so in the product space we cannot separate different infinite words if they only differ in places where one of them has a $*$.  This new topology on $\Lambda$ gives it many connected shift-invariant subspaces.  In fact Baldwin showed that it contains copies of every dendritic Julia set for a quadratic map.  We make this more precise below.

Let $\sh$ denote the natural shift map on a product space.  A sequence $\tau\in \Lambda$ is called \emph{$\Lambda$-acceptable} if, and only if, the following hold. \begin{enumerate}
  \item For all $n\in \w$, $\tau_n=*$ if, and only if $\sh^{n+1}(\tau)=\tau$.
  \item For all $n\in \w$ such that $\sh^n(\tau)\neq \tau$ there is an $m\in \w$ such that $*\neq \tau_{m+n}\neq \tau_m\neq *$, that is to say if $\sh^n(\tau)\neq \tau$ then these two sequences differ in a position where neither is $*$.
\end{enumerate}  The sequences $\tau$ which are $\Lambda$-acceptable are the sequences which we can view as \emph{kneading sequences}.  If $\tau$ is $\Lambda$-acceptable, then $\al\in \Lambda$ will be called \emph{$(\Lambda, \tau)$-consistent} if, and only if, for all $n\in \w$, $\al_n=*$ implies that $\sh^{n+1}(\al)=\tau$.  A sequence $\al\in \Lambda$ is called $(\Lambda,\tau)$-\emph{admissible} if, and only if, it is $(\Lambda, \tau)$-consistent and for all $n\in \w$ such that $\sh^n(\al)\neq *\tau$ then there is a position where these sequences differ and neither is a star (i.e. there is some $m>0$ such that $*\neq \al_{m+n}\neq \tau_{m-1}\neq *$).


\begin{defn}\label{def:D_tau}
Let $\C D_\tau\ :=\ \bigl\{x\in\Lambda\ :\ x\mbox{ is $(\Lambda, \tau)$-admissible}\bigr\}.$
\end{defn}

Thus $\C D_\tau$ is the set of all possible itineraries allowed by the kneading sequence $\tau$, about which Baldwin proved the following:

\begin{thm}\cite[Theorem 2.4]{Baldwin-JFPTA}
Let $\tau$ be $\Lambda$-acceptable.  Then $\C D_\tau$ is a shift-invariant self-similar dendrite.
\end{thm}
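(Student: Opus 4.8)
The plan is to verify the three assertions in turn, the recurring theme being that the ambient product $\Lambda$ is non-Hausdorff, so most of the effort goes into showing that the admissibility conditions force $\C D_\tau$ to be a metrizable continuum of the correct combinatorial shape. I would begin with the two routine points. Shift-invariance, $\sh(\C D_\tau)\subseteq\C D_\tau$, is a direct re-indexing: if $\al$ is $(\Lambda,\tau)$-consistent then so is $\sh(\al)$, since a $*$ at position $n$ of $\sh(\al)$ is a $*$ at position $n+1$ of $\al$ and consistency of $\al$ gives $\sh^{n+1}(\sh\al)=\tau$, and the second admissibility clause for $\sh(\al)$ follows from that for $\al$ under the shift $n\mapsto n+1$. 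For compactness, $\Lambda=\{0,1,*\}^\w$ is compact by Tychonoff (each finite factor is compact), so it suffices to show $\C D_\tau$ is closed; both defining clauses are conjunctions of conditions on finitely many coordinates together with closed tail-equals-$\tau$ requirements, and so cut out a closed set.

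The first substantive step is metrizability, and this is where the admissibility conditions earn their keep. The sole obstruction to Hausdorffness in $\Lambda$ is a pair of distinct points differing only in coordinates at which one of them is $*$. I claim admissibility excludes exactly this: if $\al\neq\beta$ are both admissible then they differ at some coordinate where neither is $*$, so that disjoint basic cylinders separate them. The engine is the second admissibility clause, which for $n=0$ asserts that any admissible $\al\neq *\tau$ satisfies $*\neq\al_m\neq\tau_{m-1}\neq *$ for some $m>0$; tracking the first coordinate at which $\al$ and $\beta$ disagree and using consistency (a $*$ at a position pins the whole tail to $\tau$) reduces a general disagreement to such a genuine non-$*$ disagreement. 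Hence $\C D_\tau$ is Hausdorff, and being a closed subspace of the compact second-countable space $\Lambda$ it is compact metrizable; one can in fact write down an admissible metric that weights only the non-$*$ coordinates.

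The heart of the matter is the dendrite and self-similarity claims, which I would prove together through the first-symbol decomposition. Consistency forces the unique admissible point with initial symbol $*$ to be $*\tau$, which one checks is admissible; and since $*$ lies in the closure of both $0$ and $1$ in the factor topology, $*\tau$ is a cut point joining the half $\{x\in\C D_\tau:x_0=0\}$ to the half $\{x\in\C D_\tau:x_0=1\}$. The two inverse branches of the shift, namely prepending $0$ (respectively $1$) with the convention $\tau\mapsto *\tau$, then map $\C D_\tau$ homeomorphically onto the two closed halves, whose union is all of $\C D_\tau$ and whose intersection is the single point $*\tau$; this is precisely the self-similar structure, and $\sh$ is the common left inverse. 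Connectedness is immediate from this two-piece decomposition, and iterating it realizes $\C D_\tau$ as the inverse limit of the finite trees $T_n$ of admissible length-$n$ words---each $T_n$ built, coordinate by coordinate, from the three-point connected space $\{0,*,1\}$, which is a combinatorial arc---with word-truncation as bonding maps. Invoking the classical fact that a monotone inverse limit of trees is a dendrite then yields local connectedness and the absence of simple closed curves simultaneously.

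I expect the main obstacle to lie in this last step, in two linked places. First, one must check that the inverse-limit topology on $\varprojlim T_n$ genuinely coincides with the subspace topology that $\C D_\tau$ inherits from $\Lambda$---equivalently, that the branch-and-cut-point data recorded combinatorially by the positions of the $*$'s match the actual topological separations established in the metrizability step. Second, and most delicately, one must rule out simple closed curves; the clean route is to verify that the truncation bonding maps are monotone (connected point-preimages), since a monotone inverse limit of arcs and trees cannot close up a loop, and it is exactly here that the second clause of both $\Lambda$-acceptability and $(\Lambda,\tau)$-admissibility is indispensable, as it is what forbids the spurious identifications of itineraries that would otherwise create a cycle.
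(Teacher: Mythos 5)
This statement is not proved in the paper at all---it is imported verbatim from Baldwin \cite{Baldwin-JFPTA}---so your proposal has to stand entirely on its own, and it does not: both of the steps you yourself flagged as delicate fail, for the same underlying reason, namely that the admissibility clauses are \emph{tail} conditions, invisible both to the closure operator of $\Lambda$ and to finite truncation. Take compactness first. Your claim that $\C D_\tau$ is closed in $\Lambda$ is false, because in this non-Hausdorff product ``$\sh^{n+1}(\al)=\tau$'' is not a closed condition (equality is not a closed relation without Hausdorffness). Concretely, a basic open set containing a point $y$ constrains only those coordinates at which $y$ is \emph{not} $*$; so if $x\in\C D_\tau$ is non-precritical with $\sh(x)\neq\tau$, and $y$ is obtained from $x$ by replacing $x_0$ with $*$, then every open set containing $y$ contains $x$, whence $y\in\cl{\C D_\tau}$, yet $y\notin\C D_\tau$ since consistency would force $\sh(y)=\tau$. (Worse, the all-$*$ sequence lies in the closure of \emph{every} nonempty subset of $\Lambda$.) With closedness gone you have no compactness, and with compactness gone the Urysohn step (compact Hausdorff plus second countable implies metrizable) collapses as well. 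The one part of this half that does survive is Hausdorffness: your argument that two distinct admissible points must disagree at a coordinate where neither is $*$, driven by the second admissibility clause, is correct.

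The second and more serious gap is the inverse-limit identification, which in your plan carries connectedness, local connectedness and the absence of simple closed curves. If $T_n$ is the set of admissible length-$n$ words with truncation as bonding maps, the natural map $\C D_\tau\to\varprojlim T_n$ is injective but \emph{not} surjective, and the inverse limit is not even Hausdorff, so it cannot be homeomorphic to $\C D_\tau$, let alone be a dendrite. The phantom threads are exactly the points Baldwin's formalism is designed to delete: for non-periodic $\tau$, every finite prefix of $0\tau$ extends to an admissible sequence (continue it so as to break agreement with $*\tau$ beyond the prefix), so the prefixes of $0\tau$ form a coherent thread; yet $0\tau$ itself violates the second admissibility clause, since it differs from the critical point $*\tau$ only in the position where the latter has $*$. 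Moreover, no open set of $\varprojlim T_n$ separates this thread from the thread of $*\tau$, since opens in each $T_n$ constrain only non-$*$ coordinates. (For periodic $\tau$, the analogous phantoms replace each $*$ of $*\tau$ by arbitrary bits.) The repair is to \emph{quotient} each level, gluing $w0u\sim w\!*\!u\sim w1u$ whenever $w\!*\!u$ truncates a precritical point, so that each level is an honest finite Hausdorff tree and the bonding maps are monotone; proving that $\C D_\tau$ is homeomorphic to that quotient inverse limit is precisely the substance of Baldwin's theorem, and it is absent here. Two smaller points: your remark that ``connectedness is immediate from this two-piece decomposition'' is circular, as the two halves are homeomorphic copies of the very space whose connectedness is at issue; and even the (essentially sound) self-similar decomposition via $x\mapsto 0x$, $x\mapsto 1x$, $\tau\mapsto *\tau$ requires the unproved lemma that every admissible $x\neq\tau$ disagrees with $\tau$ at a coordinate where neither is $*$.
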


Baldwin proved that $\C D_\tau$ has a natural arc-length (or taxicab) metric, $d$, which we use in this paper. Moreover, this family of spaces includes all of the quadratic Julia sets which are dendrites:

\begin{thm}\cite[Theorem 2.5]{Baldwin-JFPTA}\label{thm:dendrite_conj}
Let $f_c(z)=z^2+c$.  If the Julia set of $f_c(z)$, $J_c$, is a dendrite then there is a $\Lambda$-acceptable sequence $\tau$ such that $f_c|_{J_c}$ is conjugate to $\sh|_{\C D_\tau}$.
\end{thm}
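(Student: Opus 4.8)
The plan is to build an explicit itinerary (coding) map $\pi\colon J_c\to\C D_\tau$ from the branched dynamics of $f_c$ on the dendrite and to prove it is a topological conjugacy. Since $f_c(z)=f_c(-z)$, the map is two-to-one with its only critical point at $0$ and critical value $c=f_c(0)$. Because $J_c$ is a dendrite it is uniquely arcwise connected, and removing the critical point $0$ separates $J_c$ into components; I would pair these up, using the involution $z\mapsto-z$, into two open ``halves'' $H_0$ and $H_1$ with $H_1=-H_0$ and common boundary containing $0$. This partition defines an itinerary: for $z\in J_c$ set $\pi(z)_n=0$ or $1$ according as $f_c^{\,n}(z)\in H_0$ or $H_1$, and $\pi(z)_n=*$ exactly when $f_c^{\,n}(z)=0$. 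I would take $\tau:=\pi(c)$, the itinerary of the critical value, so that $\pi(0)=*\tau$ is the itinerary of the critical point. The first task is then to verify that $\tau$ is $\Lambda$-acceptable: condition (1) records that a $*$ appears in $\tau$ at position $n$ precisely when the critical value returns to $0$ at time $n$, equivalently when $\sh^{n+1}(\tau)=\tau$ (using that $f_c^{-1}(c)=\{0\}$ and that points with equal itineraries coincide), while condition (2) is the separation statement that distinct shifts of $\tau$ differ in a genuine, non-$*$ coordinate, which I would deduce from expansivity of $f_c$ on $J_c$.

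Next I would show that $\pi$ actually lands in $\C D_\tau$, i.e.\ that $\pi(z)$ is $(\Lambda,\tau)$-admissible for every $z$: consistency is immediate from the definition of $\tau$, since a $*$ at position $n$ means $f_c^{\,n}(z)=0$, whence $\sh^{n+1}(\pi(z))=\pi(c)=\tau$, and the admissibility clause again reflects the separation of orbits. The intertwining relation $\pi\circ f_c=\sh\circ\pi$ is then automatic, since applying $f_c$ once deletes the leading symbol of the itinerary. The delicate point is continuity of $\pi$: with an ordinary two-symbol coding the itinerary would jump between $0$ and $1$ for points whose orbit passes near $0$, but in Baldwin's topology the only basic open set containing $*$ is $\{0,1,*\}$, so such jumps are absorbed and $\pi$ becomes continuous. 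Concretely, I would show that points close in $J_c$ keep identical itineraries for a long initial block, up to the wildcard ambiguity at positions where the orbit meets $0$, and that this controls the arc-length metric $d$ on $\C D_\tau$.

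To obtain a bijection I would prove injectivity and surjectivity separately. Injectivity is the \emph{unique itinerary property}: two points of $J_c$ sharing an itinerary would have forward orbits never separated by $H_0$ and $H_1$, and the unique-arc structure of the dendrite together with expansivity of $f_c$ forces them to coincide; this is exactly where the way the critical orbit cuts $J_c$ is essential. Surjectivity is a compactness argument: given a $(\Lambda,\tau)$-admissible $\al$, the admissibility conditions make the successive preimages of the prescribed partition pieces a nested family of nonempty compact subsets of $J_c$, and any point of their intersection has itinerary $\al$. Finally, $J_c$ is compact and, by the cited Theorem~2.4, $\C D_\tau$ with the arc-length metric $d$ is a compact metric space; hence the continuous bijection $\pi$ is a homeomorphism, and with the intertwining relation this realizes $f_c|_{J_c}$ as conjugate to $\sh|_{\C D_\tau}$.

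I expect the main obstacle to be establishing continuity and injectivity of $\pi$ simultaneously, since both rest on a precise description of how the forward orbit of the critical point $0$ partitions the dendrite. Continuity requires that the only coding ambiguity occur exactly at the wildcard positions and be precisely the ambiguity dissolved by the non-Hausdorff topology, while injectivity requires that this same critical data suffice to separate every pair of distinct points. Deriving these separation and expansivity properties from the topological dynamics of quadratic dendritic Julia sets---rather than importing them---is the technical heart of the argument, and it is exactly where the dendrite hypothesis (unique arcwise connectedness) does the real work.
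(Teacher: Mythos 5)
This statement is Baldwin's Theorem 2.5, which the paper cites without proof, so there is no internal argument to compare yours against; your proposal has to stand on its own as a proof of Baldwin's result. Its skeleton is the standard one and matches what underlies Baldwin's treatment: code orbits by the two halves of $J_c$ cut at the critical point, let $\tau$ be the itinerary of the critical value, get continuity for free from the wildcard topology (your observation that $\pi^{-1}$ of a basic cylinder is a finite intersection of open preimages is exactly right), and upgrade a continuous bijection between compacta to a homeomorphism.

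There are, however, two genuine gaps. First, both your verification of acceptability and your injectivity argument lean on ``expansivity of $f_c$ on $J_c$,'' and that property is false here: since $J_c$ is a dendrite, the critical point $0$ lies in $J_c$, and for $z$ near $0$ the pair $z,-z$ satisfies $f_c^n(z)=f_c^n(-z)$ for all $n\geq 1$ while $d(z,-z)$ is arbitrarily small, so no expansivity constant exists (a rational map is expansive on its Julia set only when the Julia set contains no critical point). The unique itinerary property is the technical heart of the theorem and must be derived from complex-dynamical input instead --- local connectivity of $J_c$, the Carath\'eodory semiconjugacy with angle doubling, and shrinking of puzzle pieces or the absence of wandering continua --- which is precisely what Baldwin imports; it cannot be recovered from abstract dendrite topology plus a nonexistent expansion property. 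Second, your partition is underdetermined: removing $0$ can disconnect $J_c$ into more than two components, since the valence of $0$ is twice the valence of $c$, and there are Misiurewicz parameters (branch points of the Mandelbrot set) where three or more external rays land at $c$, giving $0$ valence six or more. ``Pairing the components using $z\mapsto -z$'' then admits many symmetric groupings, and for an arbitrary one your injectivity argument breaks: two points lying in distinct components that were assigned to the same half are separated by $0$ in $J_c$ but are never separated by the coding. The correct halves are the ones cut out by the two external rays landing at $0$ that are preimages of a chosen ray landing at $c$; identifying them, and showing the itinerary map they induce is injective, again requires the external-ray structure rather than the dendrite structure alone.
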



Let $\tau$ be a $\Lambda$-acceptable sequence.  We call $\tau$ the \emph{kneading sequence} for the self-similar dendrite $\C D_{\tau}$.  We call the point in $\C D_{\tau}$ of the form $*\tau$ the \emph{critical point} and all of its pre-images \emph{precritical points.}


Let $x=x_0x_1\ldots$ and $y=y_0y_1\ldots$ be non-precritical points, with $z=z_0z_1\ldots$ another (possibly precritical) point, and let $n\in \omega$. We define
\[
\restr{x}{n} := x_0x_1\ldots x_n.
\]
For every $i\leq n$, if $x_i = z_i$ whenever $z_i\neq*$, we say that $\restr{x}{n}$ and $\restr{z}{n}$ are \emph{equivalent}, and we write
$$
\restr{x}{n}\approx \restr{z}{n}.
$$
If instead $x_i=z_i$ for all $i\in\omega$ whenever $z_i\neq *$ then we write
$$
x\approx z.
$$
We say that
$$
\restr{x}{n}\simeq \restr{y}{n}
$$
if $\restr{x}{n} = \restr{y}{n}$ or there is a pre-critical point $z'$ for which
\[
\restr{x}{n}\approx \restr{z'}{n}\approx \restr{y}{n}.
\]
Given $\alpha\in\{0,1,*\}^{<\omega}$, the basic open cylinder sets for ${\C D_{\tau}}$ are
\[
B_{\alpha}^{\tau}\ :=\ \bigl\{x\in{\C D_{\tau}}\ :\ x_i=\alpha_i\mbox{ whenever }\alpha_i\neq*\mbox{, for }i\leq len(\alpha)\bigr\}.
\]

\medskip
Lemmas \ref{lem:precritical_equiv} and \ref{brentlemma} and Remark \ref{lem:kneading_equiv} are easy observations.

\begin{lem}\label{lem:precritical_equiv}
Suppose that $z_1$ and $z_2$ are precritical points, and $x$ is not precritical with
\[\restr{z_1}{n}\approx\restr{x}{n}\approx\restr{z_2}{n}.\]
for $n\in \omega$. Then $\restr{z_1}{n}\approx\restr{z_2}{n}$.
\end{lem}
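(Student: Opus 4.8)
The plan is to unwind the definition of $\approx$ and to use the star-free middle point $x$ as a bridge between $z_1$ and $z_2$. The one preliminary fact I would isolate first is that a non-precritical point carries no $*$: if $x_i=*$ for some $i$, then by $(\Lambda,\tau)$-consistency $\sh^{i+1}(x)=\tau$, so $\sh^i(x)=*\tau$ is the critical point and $x$ is precritical, contrary to hypothesis. Hence every coordinate of $x$ lies in $\{0,1\}$, and in particular $x$ has no positions at which it is a wildcard.

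With this in hand I read the two hypotheses directly from the definition of $\approx$, using that in each case $x$ is the non-precritical (star-free) point and $z_1,z_2$ are the possibly-starred ones. Thus $\restr{z_1}{n}\approx\restr{x}{n}$ gives $x_i=(z_1)_i$ for every $i\le n$ with $(z_1)_i\neq *$, and $\restr{x}{n}\approx\restr{z_2}{n}$ gives $x_i=(z_2)_i$ for every $i\le n$ with $(z_2)_i\neq *$. The desired conclusion $\restr{z_1}{n}\approx\restr{z_2}{n}$ asserts that $z_1$ and $z_2$ agree at every coordinate $i\le n$ where neither is a $*$, the coordinates at which at least one of them is a $*$ imposing no constraint. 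I would then verify this coordinatewise: fix $i\le n$ with $(z_1)_i\neq *$ and $(z_2)_i\neq *$; by the two equalities just recorded, $(z_1)_i=x_i=(z_2)_i$, so $z_1$ and $z_2$ agree at $i$. Since $i$ was arbitrary among the positions where both are non-star, this is precisely $\restr{z_1}{n}\approx\restr{z_2}{n}$.

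As the paper signals, this is an easy observation and no step is genuinely difficult; the only point that needs care is the reading of $\approx$ between two precritical points, since the definition is phrased for a non-precritical word against a possibly-precritical one. The resolution I would make explicit is that $\approx$ is the compatibility relation ``agree wherever both are non-star,'' and that the star-freeness of the intermediate $x$ is exactly what lets agreement be transported from $z_1$ through $x$ to $z_2$.
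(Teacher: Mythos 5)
Your proof is correct: the paper offers no argument at all for this lemma (it is dismissed as an ``easy observation''), and your coordinatewise verification is exactly the intended one. The key point you isolate --- that a non-precritical point is star-free by $(\Lambda,\tau)$-consistency, so agreement can be transported from $z_1$ through $x$ to $z_2$ at every position where both $z_1$ and $z_2$ are non-star --- is precisely what makes the observation immediate.
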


\begin{defn}\label{def:returntime}
If $\tau$ is periodic, let $P$ be the period of $\tau$.  If $\tau$ is not periodic then for each integer $m\in \N$ we define the \emph{return time for $m$}, $r_m$, as $$r_m=\min\left(\{k\in \N:\restr{\sh^k(\tau)}{m}=\restr{\tau}{m}\}\cup\{\infty\}\right).$$
\end{defn}

It is easy to see that if $\tau$ is non-recurrent then there is some $M$ such that for all $m\ge M$, $r_m=\infty$.  Also if $\tau$ is recurrent but not periodic then $r_m$ is an integer for each $m\in\nat$ and $r_m\to \infty$ as $m\to \infty$.

\begin{rem}\label{lem:kneading_equiv}
Notice that if $k,j\in\nat$, with $k>j$ such that
\[
\restr{\tau}{j} \approx \restr{\sigma^{k-j}(\tau)}{j},
\]
then
\begin{enumerate}
	\item if $\tau$ is periodic with period $P$ we either have that $k-j=nP$ for some $n\in\nat$, or $j<P$;
	\item if $\tau$ is not periodic we have that $k-j\geq r_{j}$.
\end{enumerate}
\end{rem}

The next result is central to our understanding of dendrite maps, and follows from the definition of the cylinder sets above.

\begin{lem}\label{brentlemma} For $x,y\in{\C D_{\tau}}$ and $\eps>0$, there is an $N_{\eps}\in \N$ such that
$$d(x,y)<\eps$$
if, and only if
$$x\rtr_{N_{\eps}}\simeq y\rtr_{N_{\eps}}.$$
\end{lem}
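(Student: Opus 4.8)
The statement says that the arc-length metric $d$ and the combinatorial relation $\simeq$ are quantitatively compatible, so the plan is to pin down how the level of $\simeq$-agreement controls $d$ from both sides, using the self-similar structure of $\C D_\tau$. Fix $\eps>0$. For each $n$ consider the set $C_n(x)=\{w\in\C D_\tau: w\rtr_n\simeq x\rtr_n\}$; unwinding the definition of $\simeq$, this is the union (glued at precritical points) of the level-$n$ cylinder sets $B^\tau_\alpha$ that contain $x\rtr_n$. By self-similarity each such cylinder is a contracted copy of $\C D_\tau$, so the arc-length diameters of the sets $C_n(x)$ shrink geometrically and tend to $0$ uniformly in $x$. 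This gives a decreasing bound $U(n)\to 0$ with $d(x,y)\le U(n)$ whenever $x\rtr_n\simeq y\rtr_n$, which already supplies the ``if'' direction once $n$ is large enough that $U(n)<\eps$.

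For the converse I would bound $d(x,y)$ from below when $x\rtr_n\not\simeq y\rtr_n$. The obstacle here is that in a dendrite distinct level-$n$ cylinders meet at precritical branch points, so points in different cylinders can be arbitrarily close -- and this is precisely the configuration that $\simeq$ rules out. Indeed, if $x$ and $y$ both lay near a common precritical point $z'$ we would have $x\rtr_n\approx z'\rtr_n\approx y\rtr_n$, forcing $x\rtr_n\simeq y\rtr_n$; Lemma \ref{lem:precritical_equiv} is what makes this gluing coherent, guaranteeing that ``both near a shared branch point'' is captured exactly by $\simeq$. Hence $x\rtr_n\not\simeq y\rtr_n$ forces $x$ and $y$ onto genuinely opposite sides of every shared precritical point, so the unique arc joining them must traverse a definite fraction of a level-$(n-1)$ piece. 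This yields a bound $V(n)>0$ with $d(x,y)\ge V(n)$ whenever $x\rtr_n\not\simeq y\rtr_n$.

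It remains to calibrate $N_\eps$, and here the decisive feature is the relative scaling of the two estimates. Because moving one level deeper contracts lengths by a fixed ratio, the diameter of a level-$n$ piece does not exceed the separation of non-$\simeq$ classes one level deeper, i.e.\ $U(n)\le V(n+1)$. Taking $N_\eps$ to be the largest $n$ with $V(n)\ge\eps$ then gives both $V(N_\eps)\ge\eps$ and $U(N_\eps)\le V(N_\eps+1)<\eps$, so for this single $N_\eps$ the relation $x\rtr_{N_\eps}\simeq y\rtr_{N_\eps}$ forces $d(x,y)<\eps$ while its failure forces $d(x,y)\ge\eps$, which is the claimed biconditional. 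I expect the main obstacle to be exactly this calibration: establishing $U(n)\le V(n+1)$ in the taxicab metric, which is what lets one $N_\eps$ serve both directions, rather than merely proving $\mesh\to 0$ and positivity of the gaps. Controlling the wildcard symbol $*$ and the precritical identifications uniformly across all $x,y$ is the delicate point.
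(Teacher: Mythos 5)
Your two one-sided estimates (the mesh bound $U(n)\to 0$ and the separation bound $V(n)>0$) are sound in substance, but the calibration step that is supposed to weld them into a single biconditional is where the argument fails, and it cannot be repaired. The target itself --- one $N_\eps$ depending only on $\eps$ with $d(x,y)<\eps$ holding \emph{if and only if} $x\rtr_{N_\eps}\simeq y\rtr_{N_\eps}$, uniformly in $x,y$ --- is unattainable on a nondegenerate connected space. Suppose each set $\{(x,y):d(x,y)<\eps\}$ equalled $R_{N_\eps}:=\{(x,y):x\rtr_{N_\eps}\simeq y\rtr_{N_\eps}\}$. Fix $x_0\neq y_0$ in $\C D_\tau$ and put $D_0=d(x_0,y_0)$; since $\C D_\tau$ is connected and $y\mapsto d(x_0,y)$ is continuous, every value $r\in(0,D_0)$ is realized as a distance. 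Hence for $\eps_1<\eps_2$ in $(0,D_0)$ the set $\{d<\eps_1\}$ is \emph{strictly} contained in $\{d<\eps_2\}$, so $\eps\mapsto\{d<\eps\}$ is injective on $(0,D_0)$, producing uncountably many distinct sets that would all have to occur in the countable list $\{R_N:N\in\N\}$ --- a contradiction. Your claimed inequality $U(n)\le V(n+1)$ is the concrete casualty: for any pair with $x\rtr_n\simeq y\rtr_n$ but $x\rtr_{n+1}\not\simeq y\rtr_{n+1}$ one has $V(n+1)\le d(x,y)\le U(n)$, so the true inequality runs the other way, and yours could hold only if every such ``transition pair'' sat at exactly the same distance, which connectedness (distances vary continuously along arcs) again rules out.

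The constructive conclusion is that nothing beyond your $U$ and $V$ is needed, and the defect you ran into lies in the lemma's formulation rather than in your estimates: the paper itself offers no proof of Lemma \ref{brentlemma} (it is declared to follow from the definition of the cylinder sets), and every later use of it requires only the two interleaved one-sided statements. Namely: (i) for every $\eps>0$ there is $N$ such that $x\rtr_N\simeq y\rtr_N$ implies $d(x,y)<\eps$ (your $U(N)<\eps$; this is the direction invoked in Theorem \ref{thm:canonical_shadowing}); and (ii) for every $N$ there is $\eps>0$ such that $d(x,y)<\eps$ implies $x\rtr_N\simeq y\rtr_N$ (your $\eps\le V(N)$; this is the direction used when unpacking $\delta$ pseudo-orbits). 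You should prove exactly (i) and (ii) and stop. For (i) your shrinking-diameter argument is fine. For (ii), replace the heuristic about traversing ``a definite fraction of a level-$(n-1)$ piece'' by compactness, which is where Lemma \ref{lem:precritical_equiv} and the cylinder topology genuinely enter: if $V(N)=0$, pick pairs $x_k,y_k$ with $d(x_k,y_k)\to0$ and (after passing to subsequences) fixed words $x_k\rtr_N=\alpha$, $y_k\rtr_N=\beta$ with $\alpha\not\simeq\beta$, and a common limit $p\in\C D_\tau$. Convergence means that eventually $x_k,y_k\in B^\tau_{p\rtr_N}$, i.e.\ $\alpha\app p\rtr_N\app\beta$. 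If $p$ is not precritical then $p\rtr_N$ has no $*$, forcing $\alpha=\beta$; if $p$ is precritical then it is precisely the common precritical witness that $\alpha\not\simeq\beta$ forbids. Either way a contradiction, so $V(N)>0$.
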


This lemma describes the key difference between the setting of this paper and the setting of shifts of finite type. In SFTs two points are within $\eps$ of each other if and only if they are \emph{equal} for their initial sement of length $N_{\eps}$.  In the symbolic spaces we study in this paper,  we know that two points $x$ and $y$ are within $\eps$ of each other if, and only if either:
\begin{enumerate} \item $x$ and $y$ have exactly the same initial segment of length $N_{\eps}$, or
\item $x$ and $y$ have a disagreement in say the $j$th place (with $j\le N_{\eps}$), in which case there is a precritical point $z$ between them that has a $*$ in the $j$th place.
\end{enumerate}

\begin{defn}\label{def:flip}
In case (2) we say that $x$ and $y$ have a \emph{flip} in the $j$th place.
\end{defn}

For example we could have two points $x$ and $y$ within $\eps$ of each other as below:

\begin{align*} x=x_0x_{1}x_{2}\dots x_{j-1}&x_{j}x_{j+1}\dots x_{N_{\eps}}\\
z=x_0x_{1}x_{2}\dots x_{j-1}&*\tau_{1}\tau_{2}\dots \tau_{N_{\eps}-j}\\
y=x_0x_{1}x_{2}\dots x_{j-1}&y_{j}y_{j+1}\dots y_{N_{\eps}}\\
\end{align*}
where $j$ is chosen minimally. In this case we must have that the words $x_{j+1} \dots x_{N_{\eps}}$ and $y_{j+1}\dots y_{N_{\eps}}$ are equivalent to $\tau_{1}\dots \tau_{N_{\eps}-j}$. So if $\tau$ is not periodic then it must be the case that these three words are equal. If instead $\tau$ is periodic, say with period $P$, then these words must be equal for the first $P$ many symbols, then they can have another flip, then they must be equal for the next $P$ many symbols, etc.  Analyzing this situation for $\delta$ pseudo-orbits and using it to construct $\eps$-shadowing points is the focus of the next section.


\section{Pseudo-Orbits in Dendrites}

In this section we will prove a number of results pertaining to pseudo-orbits in dendrites, which will ultimately allow us to prove that dendrite maps have shadowing.

Let $\delta>0$.  For the purpose of illustration, we begin with a consideration of $\delta$ pseudo-orbits in SFT spaces. Let$\{x_{i}\}_{i\in \N}$ be a $\delta$ pseudo-orbit in $X$, a SFT. Let $N_{\delta}	\in \N$ be defined so that $$d(x,y)<\delta$$ if, and only if $$x\rtr_{N_{\delta}}=y\rtr_{N_{\delta}}.$$ By definition we have $$\sh(x_{1})\rtr_{N_{\delta}}= x_{2}\rtr_{N_{\delta}}$$ and $$\sh(x_{2})\rtr_{N_{\delta}}=x_{3}\rtr_{N_{\delta}}$$ which implies $$\sh^{2}(x_{1})\rtr_{n_{\delta}-1}=\sh(x_{2})\rtr_{N_{\delta}-1}=x_{3}\rtr_{N_{\delta}-1}.$$ If we denote $x_{i}$ by $x^{i}_{0}x^{i}_{1}x^{i}_{2}\dots$ then we have the following array of initial segments of length $N_\delta$ where down each column we have equality:

\begin{align*} x^{1}_{0}x^{1}_{1}x^{1}_{2}x^{1}_{3}x^{1}_{4}&\dots x^{1}_{N_{\delta}}\\
x^{2}_{0}x^{2}_{1}x^{2}_{2}x^{2}_{3}&x^{2}_{4}\dots x^{2}_{N_{\delta}}\\
x^{3}_{0}x^{3}_{1}x^{3}_{2}&x^{3}_{3}x^{3}_{4}\dots x^{3}_{N_{\delta}}\\
x^{4}_{0}x^{4}_{1}&x^{4}_{2}x^{4}_{3}x^{4}_{4}\dots x^{4}_{N_{\delta}}\\
x^{5}_{0}&x^{5}_{1}x^{5}_{2}x^{5}_{3}x^{5}_{4}\dots x^{5}_{N_{\delta}}\\
\end{align*}

The column equality in the above array allows a straightforward proof of shadowing in SFT spaces. For sufficiently small $\delta$ the point $z=x^{1}_{0}x^{2}_{0}x^{3}_{0}\dots$ is in the space, and it also $\delta$ shadows the pseudo-orbit. It is easy to see from the array that $$z\rtr_{N_{\delta}}=x_{1}\rtr_{N_{\delta}}.$$ This is also true for the $j$th shift of $z$ and the point $x_{j}$.

The symbolic spaces we consider in this paper are far less simple.  Let $\{x_{i}\}_{i\in\N}$ be a $\delta$ pseudo-orbit in $\C D_{\tau}$, and let $N_{\delta}\in \N$ be given as in Lemma \ref{brentlemma}. Then, as described after the lemma, for each $i\in\nat$ we have $$\sh(x_{i})\rtr_{N_{\delta}}\simeq x_{i+1}\rtr_{N_{\delta}}.$$  This implies that either these two initial segments are equal or there is a flip in some position, and a precritical point, $z$, between $\sh(x_{i})$ and $x_{i+1}$.

Consider now the array associated with the $\delta$ pseudo-orbit:

\begin{align*} x^{1}_{0}x^{1}_{1}x^{1}_{2}x^{1}_{3}x^{1}_{4}&\dots x^{1}_{N_{\delta}}\\
x^{2}_{0}x^{2}_{1}x^{2}_{2}x^{2}_{3}&x^{2}_{4}\dots x^{2}_{N_{\delta}}\\
x^{3}_{0}x^{3}_{1}x^{3}_{2}&x^{3}_{3}x^{3}_{4}\dots x^{3}_{N_{\delta}}\\
x^{4}_{0}x^{4}_{1}&x^{4}_{2}x^{4}_{3}x^{4}_{4}\dots x^{4}_{N_{\delta}}\\
x^{5}_{0}&x^{5}_{1}x^{5}_{2}x^{5}_{3}x^{5}_{4}\dots x^{5}_{N_{\delta}}\\
&\ddots \, \ddots \, \ddots \, \, \, \,\,\, \, \, \, \ddots 	 \\
\end{align*}
Since this is a $\delta$ pseudo-orbit we know that
\begin{equation}\label{denequal1}
\sh^{t}(x_{1})\rtr_{N_{\delta}-t} \app \sh^{t-1}(x_{2})\rtr_{N_{\delta}-t} \app \dots \app x_{t+1}\rtr_{N_{\delta-t}}
\end{equation}
for every $0\leq t\leq N_{\delta}$. Notice also that
\begin{equation}\label{sftequal}
\sh^{t}(x_{1})\rtr_{N_{\delta}-t}=\sh^{t-1}(x_{2})\rtr_{N_{\delta}-t}=\dots =x_{t+1}\rtr_{N_{\delta-t}}
\end{equation}
for all $t \le N_{\delta}$ if, and only if, we have column equality in these segments as in the SFT case. But by Lemma \ref{brentlemma} it may be the case that there is a flip in some column, say the $j$th column, relative to $x_1$. If $j$ is minimal in this respect, then for all $1 \leq \ell < j$ we have the following column equality in the array:
$$
x^{1}_{\ell} = x^{2}_{\ell-1} = \dots = x^{\ell+1}_{0}.
$$
This does not hold in the $j$th column, since for some $1\leq i \leq j$ we have
$$
x^{1}_{j} = x^{2}_{j-1} = \dots = x^{i}_{j-i+1}\neq x^{i+1}_{j-i}.
$$
This motivates the following definition:


\begin{defn}\label{def:flip_row_column}
We will refer to any such $j$ above as a \emph{flip column relative to $x_{1}$} and $i$ as a \emph{flip row relative to $x_{1}$}. Given $x_t$ for $t\geq1$ we define flip columns relative to $x_t$ in an analogous manner, and given a flip column $j$ relative to $x_t$, we define the \emph{flip row of the flip column $j$} to be the least $i$ such that
$$x^t_j\neq x^{t+i}_{j-i}.$$
\end{defn}

These flip rows and flip columns are the main obstruction to proving shadowing and characterizing $\w$-limit sets for the dendrite maps under consideration.

In SFTs, letting $\delta=\eps$, one can prove every $\delta$ pseudo-orbit is $\eps$ shadowed by a point like $z$ defined above \cite{Walters}.  We would like to mirror the proof from SFT spaces in dendrites, but we must make some significant changes to account for the flip columns.  Given $\eps>0$ we will choose a $\delta>0$ (based on several upcoming lemmas) much smaller than $\eps$.  Then, given a $\delta$ pseudo-orbit $\{x_i\}_{i\in \N}$, we will construct a ``canonical form'' of an $\eps$-shadowing point, $z=z_0z_1\dots$ where $z_0=x^1_0$ and if $t_1$ is the first flip column for the pseudo-orbit we let $z_j=x^{j+1}_0$ for all $j<t_1$ but we define $z_{t_1}=\diamond$.  Letting $t_2$ be the next flip column we define $z_k=x^{k+1}_0$ for all $t_1<k<t_2$ and again we assign $\diamond$ to $z_{t_2}$.  Continuing this procedure gives us a point $\hat{z}$ with possibly infinitely many places where there is a $\diamond$, but since $\diamond$ is not in our alphabet, $\hat{z}$ is not a point in $\C D_\tau$.  We must then assign either a $0$, $1$ or $*$ to each $\diamond$ in $\hat{z}$ in such a way that the resulting point $z$ is both in $\C D_\tau$ and $\eps$-shadows the pseudo-orbit.

It is the goal of the next several results to prove that there is a $\delta$ small enough so that the $\diamond$'s will occur with larger than $N_\eps$ gaps between them.  Once this has been established, we then prove that \textit{any} assignment of $0$'s or $1$'s to the $\diamond$'s in $\hat{z}$ will generate a true $\eps$-shadow $z$ of the $\delta$ pseudo-orbit (unless $\sigma^k(\hat{z})\app\tau$ for some $k\in\w$, in which case there is a unique assignment). 


To begin, we prove a basic feature of column equality in arrays related to $\delta$ pseudo-orbits between successive flip columns.  The proof is immediate from the definition of flip column (see Figures \ref{fig:flip_equality1} and \ref{fig:flip_equality2}).

\begin{lem}\label{equalitybetweenflips} Let $\{x_i\}_{i\in \N}$ be a $\delta$ pseudo-orbit, and let $k\in \N$ be chosen such that there are two flip columns, $j_1$ and $j_2$ relative to $x_k$ with $j_1<j_2$ and no flip columns relative to $x_k$ between $j_1$ and $j_2$.  Then we have the following equalities:\begin{enumerate}
  \item for all $1\le \ell\le k+j_1$, $$\sh^{j_1+1}(x_k)\rtr_{j_2-j_1-1}
      =\sh^{j_1+1-\ell}(x_{k+\ell})\rtr_{j_2-j_1-1},$$ and
  \item for all $1\le n\le j_2-j_1-1$ $$\sh^{j_1+n}(x_k)\rtr_{j_2-j_1-n}=x_{k+j_1+n}\rtr_{j_2-j_1-n}.$$
\end{enumerate}
\end{lem}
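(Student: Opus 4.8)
The plan is to reduce both equalities to statements about column equality in the array, read off along the appropriate diagonals. Writing $x_i = x^i_0 x^i_1 x^i_2 \cdots$ as above, the defining property of a $\delta$ pseudo-orbit together with Lemma \ref{brentlemma} tells us that between any two successive rows we have equality in every column except at a flip column; the hypothesis that there are no flip columns relative to $x_k$ strictly between $j_1$ and $j_2$ then says precisely that each column $c$ with $j_1 < c < j_2$ is an \emph{equal column}, i.e.
$$x^k_c = x^{k+1}_{c-1} = \cdots = x^{k+c}_0,$$
with equality holding all the way down the diagonal. Granting this, both (1) and (2) are just repackagings of these identities: in (2) the word $x^{k+j_1+n}_0 x^{k+j_1+n}_1 \cdots$ is read off against the diagonal entries $x^k_{j_1+n} x^k_{j_1+n+1}\cdots$ of $x_k$, and in (1) the word $x^k_{j_1+1}\cdots x^k_{j_2}$ is compared with the aligned diagonal word in the lower row $x_{k+\ell}$.

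First I would fix the indexing and dispatch the easy part. For (2), comparing position $t$ of the two sides amounts to the single equality $x^k_{j_1+n+t} = x^{k+j_1+n}_t$, and both entries lie in column $c = j_1+n+t$ relative to $x_k$; as $t$ ranges over $0,\dots,j_2-j_1-n$ this $c$ ranges over $j_1+n,\dots,j_2$, with the lower entry sitting at depth $j_1+n$ down that column. For every such $c$ with $c<j_2$ the identity is immediate from the displayed equal-column fact, since those diagonals are constant down to depth $c\ge j_1+n$. The analogous bookkeeping (for the admissible range of $\ell$, where the lower entry sits at depth $\ell$ in column $c$ with $j_1<c\le j_2$) handles (1), and again the claim is immediate for all columns $c<j_2$. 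Thus the only columns needing genuine attention are the endpoints, and in fact only the flip column $j_2$: the starting column $j_1$ never appears, since both words begin at column $j_1+1$.

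The hard part will be the endpoint column $j_2$. Because the words in (1) and (2) reach $x^k_{j_2}$, I must show that the corresponding lower entries $x^{k+\ell}_{j_2-\ell}$ (resp.\ $x^{k+j_1+n}_{j_2-j_1-n}$) still agree with $x^k_{j_2}$ — that is, that the break in column $j_2$, which by Definition \ref{def:flip_row_column} occurs at its flip row, lies strictly below every row appearing in the two displays. Equivalently, with an equal-column gap present between $j_1$ and $j_2$, the flip of $j_2$ must be forced down to the bottom corner of that gap. This is the step that is not purely formal: a naive array could place the flip of $j_2$ high up, but such a configuration is incompatible with the symbolic structure. After a flip the tail of the relevant iterate is forced to agree with $\tau$ (as recorded in the discussion following Lemma \ref{brentlemma}), so two flips sitting too close along the diagonal would impose two overlapping shifted copies of $\tau$ on a common row, and hence an impossibly short recurrence of $\tau$; ruling this out is exactly what the return-time and recurrence data of Definition \ref{def:returntime} and Remark \ref{lem:kneading_equiv} control. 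I would therefore close the endpoint step by combining the flip-row definition with this recurrence constraint, treating the periodic and non-periodic cases of $\tau$ separately as flagged after Definition \ref{def:flip}, and I expect this endpoint verification — rather than the routine column-equality bookkeeping — to be the crux of the argument.
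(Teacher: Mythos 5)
Your treatment of the columns strictly between $j_1$ and $j_2$ is exactly the paper's proof: since no such column is a flip column relative to $x_k$, every diagonal $x^k_c = x^{k+1}_{c-1} = \cdots = x^{k+c}_0$ with $j_1<c<j_2$ is constant, and both displayed equalities are read off from this. Indeed the paper offers nothing more --- it declares the lemma ``immediate from the definition of flip column'' and supplies only Figures \ref{fig:flip_equality1} and \ref{fig:flip_equality2}.

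The part you flag as ``the crux,'' however --- forcing the break in column $j_2$ to lie below every row in the displays --- is a wrong turn caused by an off-by-one reading of the statement. The agreement the lemma asserts stops \emph{strictly before} the flip column $j_2$: in both figures the agreement blocks have lengths $j_2-j_1-1$ and $j_2-j_1-n$ and sit strictly between the two flip columns, and this exclusive version is also all that is ever used (in Proposition \ref{canprop} the lemma is applied to a window of columns shown to contain no flip column). The endpoint claim you set out to prove is in fact \emph{false} in general: nothing in the hypotheses bounds the flip row $i_2$ of $j_2$ from below. For a concrete failure, let $\tau$ be periodic with period $P$ and let $\sigma(x_k)$ and $x_{k+1}$ lie on opposite sides of a precritical point $z$ whose stars occur at positions $j-1$ and $j-1+P$, with disagreements at both; then $j_1=j$ and $j_2=j+P$ are successive flip columns relative to $x_k$, both with flip row $1$, so $x^k_{j_2}\neq x^{k+1}_{j_2-1}$ and your endpoint equality already fails at $\ell=1$. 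The paper's own machinery is built around exactly such configurations: in Lemma \ref{periodicsublemma} the flip columns $t_1>t_2>\cdots$ have \emph{increasing} flip rows $s_1<s_2<\cdots$ (rightmost column, smallest row), and in Lemma \ref{nonperiodicsublemma} the flip rows $i_1,i_2$ of successive flip columns are ordered only ``without loss of generality.'' Finally, the repair you sketch via return times and Remark \ref{lem:kneading_equiv} could not be carried out even in principle here: Lemma \ref{equalitybetweenflips} is a purely combinatorial statement about an \emph{arbitrary} $\delta$ pseudo-orbit, with no hypothesis tying $N_\delta$ to the recurrence of $\tau$; such quantitative hypotheses enter only later, in Lemmas \ref{periodicsublemma} and \ref{nonperiodicsublemma} and Proposition \ref{canprop}, which is where arguments of the kind you outline actually live --- and, as written, you only gesture at that step rather than prove it, so the proposal is incomplete even on its own terms.
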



\begin{figure}[ht]
\begin{center}
\psset{xunit=1.8cm,yunit=1.8cm,algebraic=true,dotstyle=o,dotsize=3pt 0,linewidth=0.8pt,arrowsize=3pt 2,arrowinset=0.25}
\begin{pspicture*}(0.5,0.5)(6.5,5)
\psline(1.0,4)(6,4)
\psline(1.75,3)(6,3)
\psline(2.8,2)(6,2)
\psline(4.7,1)(6,1)
\rput[tl](0.75,4.1){\textbf{$x_k$}}
\rput[tl](1.25,3.1){\textbf{$x_{k+l}$}}
\rput[tl](2.25,2.1){\textbf{$x_{k+j_1}$}}
\rput[tl](4.2,1.1){\textbf{$x_{k+j_2}$}}
\psline[linestyle=dotted](2.8,4.0)(2.8,2.0)
\psline[linestyle=dotted](4.7,4.0)(4.7,1.0)

\psline{->}(2.95,4.4)(1.0,4.4)
\psline{->}(1.0,4.4)(2.95,4.4)

\psline{->}(2.95,4.4)(4.55,4.4)
\psline{->}(4.55,4.4)(2.95,4.4)

\psline(2.95,3.975)(4.55,3.975)
\psline(2.95,4.025)(4.55,4.025)
\psline(2.95,2.975)(4.55,2.975)
\psline(2.95,3.025)(4.55,3.025)

\rput[tl](3.25,3.6){$Agreement$}
\psline{->}(3.8,3.6)(3.8,3.9)
\psline{->}(3.8,3.4)(3.8,3.1)
\rput[tl](1.75,4.55){\textbf{$^{j_1+1}$}}
\rput[tl](2.6,4.2){\textbf{$^{j_1}$}}
\rput[tl](4.65,4.2){\textbf{$^{j_2}$}}
\rput[tl](3.55,4.55){\textbf{$^{j_2-j_1-1}$}}
\rput[tl](2.75,2.65){$\star$}
\psline(2.8,2.45)(2.8,2.55)
\rput[tl](4.65,1.65){$\star$}
\psline(4.7,1.45)(4.7,1.55)
\psline[linestyle=dotted](2.3,2.5)(6.0,2.5)
\psline[linestyle=dotted](3.8,1.5)(6.0,1.5)
\end{pspicture*}
\caption{Lemma \ref{equalitybetweenflips} (1).}\label{fig:flip_equality1}
\end{center}
\end{figure}


\begin{figure}[ht]
\begin{center}
\psset{xunit=1.8cm,yunit=1.8cm,algebraic=true,dotstyle=o,dotsize=3pt 0,linewidth=0.8pt,arrowsize=3pt 2,arrowinset=0.25}
\begin{pspicture*}(0.5,0.5)(6.5,5)
\psline(1.0,4)(6,4)
\psline(2.8,3)(6,3)
\psline(3.4,2)(6,2)
\psline(4.7,1)(6,1)
\rput[tl](0.75,4.1){\textbf{$x_k$}}
\rput[tl](2.25,3.1){\textbf{$x_{k+j_1}$}}
\rput[tl](2.6,2.1){\textbf{$x_{k+j_1+n}$}}
\rput[tl](4.2,1.1){\textbf{$x_{k+j_2}$}}
\psline[linestyle=dotted](2.8,4.0)(2.8,3.0)
\psline[linestyle=dotted](4.7,4.0)(4.7,1.0)

\psline{->}(3.4,4.4)(1.0,4.4)
\psline{->}(1.0,4.4)(3.4,4.4)

\psline{->}(3.4,4.4)(4.55,4.4)
\psline{->}(4.55,4.4)(3.4,4.4)

\psline(3.4,3.975)(4.55,3.975)
\psline(3.4,4.025)(4.55,4.025)
\psline(3.4,1.975)(4.55,1.975)
\psline(3.4,2.025)(4.55,2.025)

\rput[tl](3.5,2.88){$Agreement$}
\psline{->}(4.0,2.9)(4.0,3.9)
\psline{->}(4.0,2.7)(4.0,2.1)
\rput[tl](2.0,4.55){\textbf{$^{j_1+n}$}}
\rput[tl](2.7,4.2){\textbf{$^{j_1}$}}
\rput[tl](4.65,4.2){\textbf{$^{j_2}$}}
\rput[tl](3.6,4.55){\textbf{$^{j_2-j_1-n}$}}
\rput[tl](2.75,3.65){$\star$}
\psline(2.8,3.45)(2.8,3.55)
\rput[tl](4.65,1.65){$\star$}
\psline(4.7,1.45)(4.7,1.55)
\psline[linestyle=dotted](1.25,3.5)(6.0,3.5)
\psline[linestyle=dotted](3.8,1.5)(6.0,1.5)
\end{pspicture*}
\caption{Lemma \ref{equalitybetweenflips} (2).}\label{fig:flip_equality2}
\end{center}
\end{figure}


Before proceeding with a proof that these dendrite maps have shadowing, we need to carefully consider both periodic and non-periodic kneading sequences and the gap between successive flip columns.  First we consider the periodic case.  Let $\tau$ be periodic, $\delta>0$ and let $N_\delta$ be defined as in Lemma \ref{brentlemma}.  Let $\{x_i\}_{i\in \N}$ be a $\delta$ pseudo-orbit.  Since $\tau$ is periodic, if $x_k$ has a flip column in say the $j$th column with $j<N_\delta$, then it can have a flip column in every $j+kP$ column afterwards, but also in some columns which are not ``in sync'' with the period.  The next lemma addresses the arrangement of the flip columns that are ``out of sync''.

\begin{lem}\label{periodicsublemma}
Let $\{x_i\}_{i\in \N}$ be a $\delta$ pseudo-orbit, and let $k$ and $j<N_\delta$ be chosen such there there is a pre-critical point $z$ with
$$
\sh(x_{k})\rtr_{N_\delta}\app z\rtr_{N_\delta}\app x_{k+1}\rtr_{N_\delta}
$$
and $j$ is minimal with $\sh^j(z)=*\tau_1\tau_2\dots \tau_{P-1}*\tau_1\dots$. Suppose also that there are flip columns $t$ for $x_k$ such that $t-j$ is not a multiple of $P$.

Choose $s_1$ minimal such that $s_1$ is a flip row of a flip column $t_1$ with $t_1-j$ not a multiple of $P$ and
$$
t_1<N_\delta.
$$
Given $s_i$ and $t_i$, define $s_{i+1}>s_i$ to be minimal such that $s_{i+1}$ is a flip row of a flip column $t_{i+1}$ with $t_{i+1}-j$ not a multiple of $P$ and with
$$
t_{i+1}<t_i.
$$
Then there are finitely many such $s_i$'s and $t_i$'s, and for each such $i$ we have
$$
N_{\delta}-t_i<iP.
$$
 \end{lem}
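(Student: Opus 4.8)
The plan is to cash in the periodicity of $\tau$ through Remark \ref{lem:kneading_equiv}(1), whose content is that two shifts of $\tau$ cannot agree, in the $\app$ sense, on a block of length $\geq P$ unless their offset is a multiple of $P$. First I would record the structure coming from the precritical point $z$: since $\sh^j(z)=*\tau_1\cdots\tau_{P-1}*\tau_1\cdots$, the $*$'s of $z$ sit at the positions $j,\,j+P,\,j+2P,\dots$, and these generate exactly the \emph{in-sync} flip columns, those $t$ with $t-j$ a multiple of $P$; by Lemma \ref{equalitybetweenflips} the array entries strictly between two consecutive flip columns read off a segment of $\tau$. Thus the whole obstruction is carried by the \emph{out-of-sync} flip columns, and the lemma asserts that these are few and are squeezed towards $N_\delta$. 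Finiteness of the $s_i,t_i$ is immediate, since by construction the $t_i$ are strictly decreasing nonnegative integers. The real content is the estimate $N_\delta-t_i<iP$.

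For the base case $i=1$ I would argue $N_\delta-t_1<P$ as follows. The flip of row $s_1$ in column $t_1$ is witnessed by a precritical point $w$ lying between $\sh(x_{k+s_1-1})$ and $x_{k+s_1}$ with $w_{t_1-s_1}=*$; hence $\sh^{t_1-s_1+1}(w)=\tau$, so from column $t_1$ onward the entries produced by $w$ trace one shift of $\tau$ anchored at $t_1$, while the entries forced by $z$ trace the period pattern anchored at $j$. Minimality of $s_1$ guarantees that no out-of-sync flip of smaller row can interfere in rows $k,\dots,k+s_1$, so between column $t_1$ and column $N_\delta$ the two descriptions must coincide; that is, two shifts of $\tau$ with relative offset congruent to $t_1-j\not\equiv 0\pmod P$ agree on the block from $t_1$ to $N_\delta$. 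By Remark \ref{lem:kneading_equiv}(1) such a block has length strictly less than $P$, which is exactly $N_\delta-t_1<P$.

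For the general estimate $N_\delta-t_i<iP$ I would run the same comparison over the whole block from column $t_i$ up to $N_\delta$, pitting the copy of $\tau$ anchored at $t_i$ against the period pattern anchored at $j$. This block is no longer clean: the previously selected out-of-sync flips $t_{i-1}>\cdots>t_1$ lie above $t_i$ and interrupt it, each re-anchoring the comparison to a position $\not\equiv j\pmod P$ (in-sync flips, being where $\tau$ itself carries a $*$, pass harmlessly through the $\app$ relation without changing the offset mod $P$). The block therefore splits into maximal clean stretches across which two shifts of $\tau$ agree at an offset $\not\equiv 0\pmod P$, and Remark \ref{lem:kneading_equiv}(1) bounds each stretch by $P$. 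With the $i-1$ interruptions $t_{i-1},\dots,t_1$ there are at most $i$ such stretches, so $N_\delta-t_i<iP$. The selection rule, at each stage taking the least new flip row $s_{i}>s_{i-1}$ whose flip column drops below $t_{i-1}$, is designed precisely so that these are the only out-of-sync flips that can break the block below row $k+s_i$.

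The main obstacle I anticipate is the index bookkeeping that makes the two ``copies of $\tau$'' comparisons precise: one must pin down exactly which row and which position each flip touches (a flip in column $t$ with flip row $s$ sits at position $t-s$, per Definition \ref{def:flip_row_column}), track how the period anchor shifts as one descends the array, and verify via Lemma \ref{brentlemma} and Lemma \ref{equalitybetweenflips} that each clean stretch really is a genuine agreement of shifts of $\tau$. The delicate point, and the crux of the counting, is to show that the chosen chain $\{(s_i,t_i)\}$ captures \emph{every} out-of-sync flip able to interrupt the block above $t_i$, so that the number of resynchronisations is exactly $i-1$ and not larger; this, together with getting the offset congruences and the off-by-one edge effects at the boundary $N_\delta$ exactly right, is where the care is needed. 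Once the stretches are correctly isolated, each appeal to Remark \ref{lem:kneading_equiv}(1) is routine.
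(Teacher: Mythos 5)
Your proposal is correct and takes essentially the same approach as the paper: your base case pits the copy of $\tau$ anchored at $t_1$ (via the witnessing precritical point) against the periodic pattern anchored at $j$ and invokes Remark \ref{lem:kneading_equiv}(1) to conclude $N_\delta-t_1<P$, exactly as the paper does. Your decomposition of the block from $t_i$ to $N_\delta$ into at most $i$ clean stretches, each of length less than $P$, is just a rephrasing of the paper's induction, which bounds each gap $t_{i-1}-t_i<P$ by the same two-shifts-of-$\tau$ comparison and telescopes to $N_\delta-t_i=(N_\delta-t_{i-1})+(t_{i-1}-t_i)<(i-1)P+P=iP$.
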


\begin{proof}
  We start with the flip row $s_1$ and flip column $t_1$.  Since $s_1$ is the least flip row associated with $x_k$ with $t_1-j$ not a multiple of $P$, there are no flips in the array in non-$j+kP$ places between $\sh^{s_1}(x_k)$ and $x_{k+s_1}$ until the $t_1$ symbol.  There must be a precritical point $z_1$ with
$$
\sh(x_{k+s_1-1})\rtr_{N_\delta}\app z_1\rtr_{N_\delta} \app x_{k+s_1}\rtr_{N_\delta}
$$
with
$$
\sh^{t_1-s_1}(z_1)=\tau.
$$
But since $t_1$ is a flip column for $x_k$ and $s_1$ is the least flip row, it must be the case that
$$
\sh^{t_1+1}(z)=\sh^{t_1+1-j}(\tau)\neq \tau_1\tau_2\dots
$$
but
$$
\sh^{t_1+1-j}(\tau)\rtr_{N_\delta-t_1}\app \sh^{t_1+1-s_1}(x_{k+s_1})\rtr_{N_\delta-t_1}
$$
and
$$
\sh^{t_1+1-s_1}(x_{k+s_1})\rtr_{N_\delta-t_1}\app \sh^{t_1+1-s_1}(z_1)\rtr_{N_\delta-t_1} =  \tau_1\tau_2\dots\tau_{N_\delta-t_1}.
$$
Therefore
$$
\sh^{t_1+1-j}(\tau)\rtr_{N_\delta-t_1}\app \tau_1\tau_2\dots\tau_{N_\delta-t_1}.
$$
By Remark \ref{lem:kneading_equiv}(1) it must be the case that $N_\delta-t_1<P$.

\begin{figure}[ht]
\begin{center}
\psset{xunit=2.0cm,yunit=2.0cm,algebraic=true,dotstyle=o,dotsize=3pt 0,linewidth=0.8pt,arrowsize=3pt 2,arrowinset=0.25}
\begin{pspicture*}(0.5,1.75)(6.5,5)
\psline(1.0,4)(6,4)
\psline(1.85,3)(6,3)
\psline(2.5,2)(6,2)

\rput[tl](0.7,4.1){\textbf{$x_k$}}
\rput[tl](1.3,3.1){\textbf{$x_{k+s_1}$}}
\rput[tl](1.8,2.1){\textbf{$x_{k+s_2}$}}

\psline[linestyle=dotted](5.2,4.0)(5.2,2.0)
\psline[linestyle=dotted](5.7,4.0)(5.7,2.0)
\psline[linestyle=dotted](2.85,4)(2.85,2.0)

\psline{->}(1.0,4.42)(6.0,4.42)
\psline{->}(6.0,4.42)(1.0,4.42)

\psline{->}(6.0,2.5)(4.8,2.5)
\psline{->}(4.8,2.5)(6.0,2.5)

\psline{->}(5.4,3.5)(6.0,3.5)
\psline{->}(6.0,3.5)(5.4,3.5)

\rput[tl](3.38,4.7){\textbf{$N_{\delta}$}}
\rput[tl](5.35,2.65){\textbf{$^{2P}$}}
\rput[tl](5.5,3.65){\textbf{$^{P}$}}
\rput[tl](2.8,4.2){\textbf{$^{j}$}}
\rput[tl](5.12,4.2){\textbf{$^{t_2}$}}
\rput[tl](5.62,4.2){\textbf{$^{t_1}$}}
\rput[tl](2.8,3.925){$\star$}
\psline(2.85,3.95)(2.85,4.05)
\rput[tl](5.15,2.15){$\star$}
\psline(5.2,1.95)(5.2,2.05)
\rput[tl](5.65,3.15){$\star$}
\psline(5.7,2.95)(5.7,3.05)
\end{pspicture*}
\caption{Lemma \ref{periodicsublemma}.}
\end{center}
\end{figure}

The result follows by induction noticing that $\sh^{t_i-s_i}(x_{k+s_i})$ will agree with $\sh^{t_i}(x_k)$ on a word of length $t_{i-1}-t_i$ which by similar reasoning must be less than $P$.  We combine this with $N_\delta-t_{i-1}<(i-1)P$ to see that $$N_\delta-t_i=(N_{\delta}-t_{i-1})+(t_{i-1}-t_i)<(i-1)P+P=iP.$$
\end{proof}

Next we consider the case that $\tau=*\tau_1\tau_2\dots$ is not periodic. The following combinatorial argument will be a useful tool when dealing with return times. For words $\alpha$ and $\beta$, when we write $\alpha\beta$ we mean $\alpha$ concatenated with $\beta$; for $n\in\nat$, when we write $\alpha^n$ we mean $\alpha$ concatenated with itself $n$ times.

\begin{lem}\label{lem:word_repetition}
Let $\alpha$ be a word of length $n$, let $\beta=\alpha\alpha$, and suppose that for some $1\leq m<n$ we have that $\sigma^{n-m}(\beta)$ begins with the word $\beta$. Then there is some word $\gamma$ of length $\ell$ with $1\leq \ell\leq m$, such that $\ell$ divides $m$ and $n-m$ and $\alpha=\gamma^{n/\ell}$.
\end{lem}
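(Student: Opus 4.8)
The plan is to reinterpret the hypothesis as a statement about the \emph{periods} of the finite word $\beta=\alpha\alpha$ and then to invoke the classical Fine--Wilf periodicity lemma. First I would make the combinatorial content of ``$\sigma^{n-m}(\beta)$ begins with $\beta$'' precise: reading off positions, it asserts exactly that $\beta$ agrees with its own shift by $n-m$ on their common length, i.e.\ that $\beta_i=\beta_{i+(n-m)}$ for every $1\le i\le 2n-(n-m)=n+m$. In other words, the word $\beta$ (of length $2n$) has period $q:=n-m$. Moreover, because $\beta=\alpha\alpha$ with $\alpha$ of length $n$, the word $\beta$ trivially also has period $p:=n$, since $\beta_i=\alpha_i=\beta_{i+n}$ for $1\le i\le n$.

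The key step is then to combine these two periods. Since $p+q=n+(n-m)=2n-m\le 2n=\length(\beta)$, the hypothesis of the periodicity lemma (Fine--Wilf) is satisfied, so $\beta$ has period $\ell:=\gcd(p,q)=\gcd(n,n-m)=\gcd(n,m)$. If instead I wish to keep the argument self-contained, I would prove exactly this instance by the standard Euclidean descent: using that a word of length $L$ having two periods $a\ge b$ with $a+b\le L$ also has period $a-b$, one iterates (replacing the pair $(a,b)$ by $(b,a-b)$, mirroring the Euclidean algorithm) down to period $\gcd(a,b)$; here the inequality $p+q\le 2n$ is precisely what guarantees the descent never runs out of room, since each new pair again has sum at most $2n$.

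Finally I would read off the conclusion. Set $\gamma:=\restr{\alpha}{\ell}$, the initial block of $\alpha$ of length $\ell$. Because $\ell=\gcd(n,m)$ divides $n$, and $\beta$ (hence its length-$n$ prefix $\alpha$) has period $\ell$, the word $\alpha$ is the concatenation of $n/\ell$ copies of $\gamma$, that is $\alpha=\gamma^{n/\ell}$. The remaining divisibility assertions are immediate from $\ell=\gcd(n,m)$: it divides $m$ (so in particular $1\le \ell\le m$), and, dividing both $m$ and $n$, it divides $n-m$ as well.

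The main obstacle I expect is not the periodicity lemma itself, which is classical, but the careful index bookkeeping in the first step: one must verify that the shift condition on $\beta$ yields \emph{both} the period $n-m$ and (via $\beta=\alpha\alpha$) the period $n$, and then check that the length margin $p+q\le 2n$ required by Fine--Wilf genuinely holds with room to spare (it does, since $\gcd(n,m)\ge 1$). Confirming the boundary indices — in particular that the overlap has length $n+m$ and that the trivial period $n$ is legitimately available — is the only delicate point.
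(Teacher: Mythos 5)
Your proof is correct, but it takes a different route from the paper's. You recast the hypothesis as a statement about \emph{periods}: $\beta=\alpha\alpha$ trivially has period $n$ (being a square) and, by the overlap condition, also period $n-m$; since $n+(n-m)\le 2n=\length(\beta)$, the Fine--Wilf periodicity lemma (or your self-contained period-subtraction descent, which is the standard proof of it) gives period $\ell=\gcd(n,m)$, and the conclusion follows by reading off the length-$\ell$ prefix. The paper never mentions periods or Fine--Wilf: it runs the Euclidean descent directly on word decompositions, writing $\alpha_0=\alpha_1^{k_1}\alpha_2$ where $\alpha_1$ is the overlapping $m$-segment and $\alpha_2$ a shorter remainder, observing that the pair $(\alpha_1,\alpha_2)$ satisfies the same overlap hypothesis as $(\alpha_0,\alpha_1)$, and iterating until the remainder vanishes, so that the successive lengths $n,m_1=m,m_2,\dots$ trace out the Euclidean algorithm and the final block has length $\gcd(n,m)$. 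The two arguments are the same Euclidean induction in different clothing: yours buys brevity and rigor by delegating the iteration to a classical theorem, and your careful index check that the shift condition really gives period $n-m$ on all of $\beta$ (agreement on the common length $n+m$) makes explicit exactly the bookkeeping the paper leaves informal; the paper's version, by contrast, is fully self-contained and exhibits the repeated block $\alpha_r$ constructively, at the cost of some ``continuing in this way'' hand-waving that your formulation avoids.
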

\begin{proof}
Set $m_1=m$ and $\alpha_0=\alpha$. From the statement of the lemma, we get that $\sigma^{n-m_1}(\beta)$ begins with the initial $m_1$-segment of $\alpha_0$; call this segment $\alpha_1$. Then  $\sigma^n(\beta)$, which is also the word $\alpha_0$, begins with $\alpha_1$ and this also corresponds to the second $m_1$-segment of the occurrence of $\alpha_0$ beginning at $\sigma^{n-m_1}(\beta)$, provided $n\geq2m_1$ (see Figure \ref{fig:overlap}).

\begin{figure}[ht]
	\begin{center}
\begin{pspicture*}(-0.5,2.5)(11.5,4.5)
\psset{xunit=1.3cm,yunit=1.0cm,algebraic=true,dotstyle=o,dotsize=3pt 0,linewidth=1.0pt,arrowsize=3pt 2,arrowinset=0.25}
\psaxes[labelFontSize=\scriptstyle,xAxis=false,yAxis=false,Dx=1,Dy=1,ticksize=-2pt 0,subticks=2]{->}(0,0)(-0.5,2.5)(8.5,4.5)
\psline(0,3)(8,3)
\psline(3,4)(7,4)
\psline(4,2.7)(4,3.3)
\psline(0,2.7)(0,3.3)
\rput[tl](1.0,3.3){$\mathbf{\alpha_0}$}
\rput[tl](5.8,3.3){$\mathbf{\alpha_0}$}
\rput[tl](4.94,4.3){$\mathbf{\alpha_0}$}
\psline[linewidth=0.5pt,linecolor=gray](4,3.7)(4,4.3)
\rput[tl](3.3,4.5){$(\alpha_1)$}
\psline[linewidth=0.5pt,linecolor=gray](5,2.7)(5,3.3)
\rput[tl](4.3,3.5){$(\alpha_1)$}
\psline(8,2.7)(8,3.3)
\psline(3,4.3)(3,3.7)
\psline(7,4.3)(7,3.7)
\rput[tl](1.95,2.88){$n$}
\psline[linewidth=0.5pt]{->}(2.15,2.8)(4,2.8)
\psline[linewidth=0.5pt]{->}(1.96,2.8)(0,2.8)
\rput[tl](3.35,3.81){$m_1$}
\psline[linewidth=0.5pt]{->}(3.75,3.75)(4,3.75)
\psline[linewidth=0.5pt]{->}(3.3,3.75)(3,3.75)
\rput[tl](1.44,3.9){$n-m_1$}
\psline[linewidth=0.5pt]{->}(2.4,3.75)(3.0,3.75)
\psline[linewidth=0.5pt]{->}(1.37,3.75)(0,3.75)
\end{pspicture*}
	\end{center}
	\caption{\small{Overlapping words.}}\label{fig:overlap}
\end{figure}
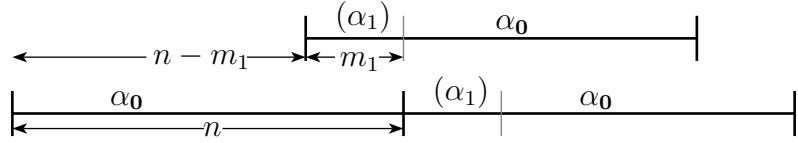

Continuing in this way we see that either
\begin{enumerate}
	\item $m_1$ divides $n$, so $\alpha_0=\alpha_1^{n/m_1}$, or
	\item $\alpha_0 = (\alpha_1^{k_1})\alpha_2$ for some $k_1\geq1$ and some word $\alpha_2$ of length $m_2<m_1$.
\end{enumerate}

In case (1) we are done. In case (2), if we let $\beta_1=\alpha_1\alpha_1$, from the structure of the word $B$ we get that $\sigma^{m_1-m_2}(\beta_1)$ begins with $\alpha_1$. But this is precisely the situation we started with, so we can iterate the above argument. Since every finite word is made up of component symbols, this process cannot continue indefinitely, so there must be some word $\alpha_r$ of length $m_r\geq1$ such that $\alpha_0=\alpha_r^{n/m_r}$.

To see that $m_r$ divides $m_1$, notice that $n=k_1m_1+m_2$, where $m_2<m_1$, and similarly $m_1=k_2m_2+m_3$ for $k_2,m_3\in\nat$ with $m_3<m_2$. This is the Euclidean Algorithm, which repeats with $m_i$ being the length of $\alpha_i$, until we get that $m_{r-1}=k_rm_r$, and $m_r$ is the greatest common divisor of $n$ and $m_1$.
\end{proof}

\begin{lem}\label{lem:return_times}
Suppose that $\tau$ is not periodic. Then the return times $\{r_m\}_{m\in \N}$ have the property that $r_t\geq t$ for infinitely many $t\in\nat$.
\end{lem}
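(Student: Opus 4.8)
The plan is to prove the contrapositive statement by analysing first reoccurrences of prefixes. First I would dispose of the easy case: if $\tau$ is not recurrent then, as recorded after Definition \ref{def:returntime}, $r_m=\infty$ for all large $m$, so $r_t\ge t$ holds for all large $t$ and there is nothing to prove. Hence I may assume $\tau$ is recurrent, and suppose for contradiction that there is a $T_0$ with $r_t<t$ for every $t\ge T_0$. The dictionary I set up is that $r_t$ is exactly the position of the first reoccurrence of the prefix $\restr{\tau}{t}$ in $\tau$: the defining condition $\restr{\sigma^{k}(\tau)}{t}=\restr{\tau}{t}$ says that $\restr{\tau}{t}$ occurs again starting at position $k$. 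Thus $r_t<t$ means this second occurrence overlaps the first, which forces $\tau$ to be $r_t$-periodic on the block $[0,t+r_t]$; in particular every prefix with $t\ge T_0$ has a period strictly smaller than its length. I would then track the least period $\pi_t$ of $\restr{\tau}{t}$. It is nondecreasing in $t$ and satisfies $\pi_t\le r_t<t$; moreover, since $\tau$ is recurrent and not periodic it is not eventually periodic, so $\pi_t\to\infty$, and therefore $\pi_t$ strictly increases at infinitely many \emph{break points}.

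The heart of the argument is to examine a break point $L\ge T_0$: a length at which $\tau$ is $\pi$-periodic on $[0,L]$ with $\pi=\pi_L$ its least period, while the period $\pi$ fails at $L+1$, i.e.\ $\tau_{L+1}\neq\tau_{L+1-\pi}$. Writing $\gamma=\restr{\tau}{\pi-1}$ for the (primitive) period word, the standing assumption gives a reoccurrence of $\restr{\tau}{L}$ at some position $q$ with $\pi\le q<L$. A reoccurrence at a multiple of $\pi$ is impossible, since it would force $\pi$-periodicity to persist past the break at $L+1$. For $q$ not a multiple of $\pi$ I would compare the two descriptions of $\tau$ on the overlap $[q,L]$: the original $\pi$-periodicity gives $\tau_j=\gamma_{\,j\bmod\pi}$, while the shifted copy gives $\tau_j=\gamma_{\,(j-q)\bmod\pi}$. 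Hence $\gamma$ agrees with its cyclic rotation by $q\bmod\pi\neq0$ all along this overlap; once the overlap is long enough to sweep out all $\pi$ residues, this contradicts primitivity of $\gamma$ and excludes the reoccurrence. Running this at each break point yields $r_L\ge L$, and as there are infinitely many break points we obtain infinitely many $t$ with $r_t\ge t$, the desired contradiction.

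I expect the main obstacle to be the \emph{late reoccurrence} case, where $q$ is so close to $L$ (namely $q>L+1-\pi$) that the overlap $[q,L]$ has length less than $\pi$ and so does not sweep out every residue of $\gamma$; the naive rotation/Fine--Wilf comparison then falls short by less than a full period. This is precisely the configuration for which the combinatorial Lemma \ref{lem:word_repetition} is designed. The plan here is to feed in the extra structure that the $q$-periodicity produces: the period word $\delta=\restr{\tau}{q-1}$ itself carries the period $\pi$, and the $q$-periodicity exhibits the square $\delta\delta$ as a prefix of $\tau$, so that the partial cyclic agreement of $\gamma$ can be promoted, via the Euclidean-algorithm mechanism of Lemma \ref{lem:word_repetition}, into a genuine shorter period of $\gamma$ — again contradicting primitivity and closing the case $r_L\ge L$. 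Turning this near-miss at the Fine--Wilf threshold into an honest contradiction is the delicate step, and it is exactly where the full strength of Lemma \ref{lem:word_repetition} (that a doubled word carrying a short period must be a proper power) is needed rather than a bare overlap argument.
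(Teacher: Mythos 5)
Your reduction to break points is sound as far as it goes: the non-recurrent case, the monotonicity and unboundedness of the least-period function $\pi_t$, and the exclusion of reoccurrences at multiples of $\pi$ are all correct. But the ``late reoccurrence'' case you flag is not merely delicate --- it is fatal to the local strategy, because the configuration you are trying to rule out actually exists. Write $s=s_0s_1s_2\ldots$ for the $0$--$1$ sequence following the initial $*$, and consider
\[
s_0s_1\ldots s_{15}\;=\;0010001\,001000100 .
\]
The length-$9$ prefix $w=001000100$ has least period $\pi=4$; the period breaks at the next symbol ($s_9=1$ while $4$-periodicity would require $s_9=s_5=0$), so this is a break point in your sense ($\pi_9=4$, $\pi_{10}=7$); and yet $w$ reoccurs at position $q=7$, since $s_7\ldots s_{15}=001000100=w$. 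Hence the return time of the break-point prefix is $7<9$ (the shift by one coming from the paper's convention that $\restr{\tau}{m}$ carries the initial $*$ does not affect the strict inequality). Here $w$ has the two periods $4$ and $7$ with $|w|=9<4+7-\gcd(4,7)=10$, exactly one symbol short of the Fine--Wilf threshold, so no rotation/primitivity argument can manufacture a shorter period of $\gamma$: the local data are consistent, so there is no contradiction to be found. In particular Lemma \ref{lem:word_repetition} cannot close this case: its hypothesis requires the square $\delta\delta$ to admit an overlapping occurrence of itself (equivalently, a period shorter than $|\delta|$), but here $\delta\delta=(0010001)^2$ has least period $7=|\delta|$, precisely because the $\pi$-periodicity breaks at position $9$, inside $\delta\delta$. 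Since the symbols beyond position $15$ are unconstrained, this configuration extends to recurrent, non-periodic, $\Lambda$-acceptable kneading sequences (e.g.\ by the usual $w\mapsto waw$ uniformly recurrent construction), so your key claim ``at every break point $L$ one has $r_L\ge L$'' is simply false.

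The missing ingredient is any use of the standing assumption at more than one length: you assume $r_t<t$ for all $t\ge T_0$ but only ever invoke it at the single value $t=L$, and no purely local analysis at one break point can succeed. The paper's proof is anchored differently and uses the assumption inductively at every length: it takes $m'$ \emph{maximal} with $r_{m'}\ge m'$, deduces $r_{m'}=r_{m'+1}=m'$ (so the initial $m'$-word $\alpha$ repeats immediately, giving a square anchored at the origin), and then shows by induction on $m>m'$ --- this is where Lemma \ref{lem:word_repetition} genuinely applies, because the relevant overlapping squares are then actually present --- that each successive return forces an exact repetition of everything before it, so that $s=\alpha\alpha\alpha\cdots$ is periodic, contradicting acceptability of a non-periodic $\tau$. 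In the example above this global forcing is visible: keeping $r_t<t$ for $t=10,11,\ldots$ forces $s_{16}=1$, then $s_{17}=0$, and so on, driving $s$ to the periodic sequence $(0010001)^\infty$; it is only this infinite propagation, not the single break point, that produces the contradiction. To repair your proof you would have to run the break-point analysis jointly over all $t\ge T_0$ in such an inductive fashion, at which point you have essentially reconstructed the paper's argument.
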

\begin{proof}
Recall first that $r_m\leq r_{m+1}$ for every $m\in\nat$. Suppose that the statement of the lemma fails; in other words we have that $r_m<m$ for cofinitely many $m\in\nat$. Then we can choose $m'$ maximum such that $r_{m'}\geq m'$, and we get that $r_{m'+1}<m'+1$. Thus
\begin{align*}
m' &\leq r_{m'}\leq r_{m'+1}\leq m',\\
\mbox{so }r_{m'} &= r_{m'+1} = m'.
\end{align*}
This gives us that the first $m'$-segment of $\tau$ (which follows the initial symbol $*$ of $\tau$) is repeated immediately; let us refer to this segment as $\alpha$.

Let $i\geq2$ be minimal such that $r_{m'+i}\neq m'$. Then
\begin{align*}
m' &= r_{m'+i-1} < r_{m'+i} < m'+i,\\
\mbox{and so }r_{m'+i} &\leq m'+i-1.
\end{align*}

Let $r_{m'+i}=m'+j$ for $1\leq j < i$, and pick $k\in\nat$ such that $(k-1)m' < m'+j \leq km'$ (see Figure \ref{fig:returntime}).

\begin{figure}[ht]
	\begin{center}
\begin{pspicture*}(-2.0,2.4)(10.0,4.8)
\psset{xunit=1.0cm,yunit=1.0cm,algebraic=true,dotstyle=o,dotsize=3pt 0,linewidth=1.0pt,arrowsize=3pt 2,arrowinset=0.25}
\psaxes[labelFontSize=\scriptstyle,xAxis=false,yAxis=false,Dx=1,Dy=1,ticksize=-2pt 0,subticks=2]{->}(0,0)(-0.5,2.5)(8.5,4.5)
\psline(0,3)(4,3)
\psline(5,3)(9,3)
\psline(6,4)(9,4)
\rput[tl](9.1,4.03){$\ldots$}
\rput[tl](9.1,3.03){$\ldots$}
\psline(2,2.7)(2,3.3)
\psline(4,2.7)(4,3.3)
\psline(0,2.7)(0,3.3)
\psline(9,2.7)(9,3.3)
\psline(5,2.7)(5,3.3)
\psline(7,2.7)(7,3.3)
\psline(6,4.3)(6,3.7)
\psline(8,4.3)(8,3.7)
\rput[tl](5.5,4.75){$^{r_{m'+i}=m'+j}$}
\rput[tl](-1.3,3.1){$\tau=$}
\rput[tl](-0.55,3.2){$(\star)$}
\rput[tl](1,3.3){$\mathbf{\alpha}$}
\rput[tl](3,3.3){$\mathbf{\alpha}$}
\rput[tl](6,3.3){$\mathbf{\alpha}$}
\rput[tl](8,3.3){$\mathbf{\alpha}$}
\rput[tl](7,3.86){$\mathbf{\alpha}$}
\rput[tl](4.25,3.0){$\ldots$}
\rput[tl](1.75,3.7){$^{r_{m'}=m'}$}
\rput[tl](3.8,3.7){$^{2m'}$}
\rput[tl](4.75,2.7){$^{(k-1)m'}$}
\rput[tl](6.8,2.7){$^{km'}$}
\end{pspicture*}
	\end{center}
	\caption{\small{Return times overlapping.}}\label{fig:returntime}
\end{figure}
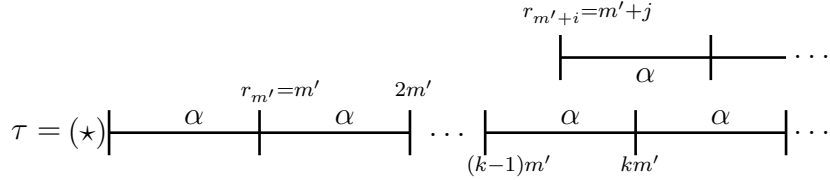

We claim that for any such $i$, the $(m'+i)$-segment of $\tau$ following the first $(r_{m'+i})$-segment will be an exact repeat of the first $(r_{m'+i})$-segment. This will force the $m'$-segment after the initial symbol of $\tau$ to repeat indefinitely, which is not an acceptable sequence for $\tau$, and the resulting contradiction would complete the proof. We prove that this occurs for the current $i$ under consideration, and a simple argument will conclude that it occurs for all $r_m>r_{m'+i}$.

If $m'+j=km'$ we can stop immediately. Otherwise $(k-1)m' < m'+j < km'$, then Lemma \ref{lem:word_repetition} tells us that there is some $\ell\leq m'$ such that each identical $m'$-segment of $\tau$ is composed of $\ell$ identical words of length $m'/\ell$. In this case we have that $\ell$ divides $(k-1)m'-j$ and we have the required repetition.

To conclude the proof, notice that for any $k\geq1$,
\[
r_{m'+i+k-1} \leq r_{m'+i+k} \leq m'+i+k-1,
\]
so when the initial $(m'+i+k)$-segment (following the initial symbol) first repeats in $\tau$ it does so before the end of the repeat of the first $(m'+i+k-1)$-segment. This forces an overlap, and as above we get that the first repeat of the initial $(m'+i+k)$-segment must exactly repeat what has come before it.
\end{proof}

\begin{lem}\label{nonperiodicsublemma}  Let $\{x_i\}_{i\in\nat}$ be a $\delta$ pseudo-orbit, let $\tau$ be a recurrent non-periodic kneading sequence, and let $\{m_i\}_{i\in \N}$ be a sequence of natural numbers chosen such that $$m_i\le r_{m_i}<r_{m_i}+1<m_{i+1}.$$  Suppose that $k$ is chosen such that there are two successive flip columns $j_1<j_2$ relative to $x_k$ with flip rows $i_1$ and $i_2$.  If there exists some $t\in \N$ such that
\begin{enumerate}
  \item[(i)] $j_2-j_1-1<m_t$, and
  \item[(ii)] $N_\delta-j_2-1>m_{t+1}$,
\end{enumerate}
then there is a third flip column, $j_3>j_2$, relative to $x_k$ with flip row $i_3$ such that
\begin{enumerate}
  \item[(a)] $i_3$ is between $i_1$ and $i_2$, and
  \item[(b)] $j_3-j_2-1<m_{t+1}$.
\end{enumerate}
\end{lem}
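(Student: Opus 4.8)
The plan is to read a copy of the kneading sequence $\tau$ off \emph{each} of the two given flips and then pit the two copies against one another through the return-time estimate of Remark~\ref{lem:kneading_equiv}(2); the whole lemma is really a collision between two shifted occurrences of $\tau$. Each flip carries a precritical witness with a $*$ at the flip position and tail $\tau$ (Definition~\ref{def:flip}). For the flip at column $j_1$ with row $i_1$ the witness gives that $\sh^{j_1-i_1+1}(x_{k+i_1})$ begins with $\tau$; feeding this into Lemma~\ref{equalitybetweenflips}(1) at $\ell=i_1$ pushes the copy up to the top row, so that
\[
\restr{\sh^{j_1+1}(x_k)}{\,j_2-j_1-1}=\restr{\tau}{\,j_2-j_1-1},
\]
and along the diagonals (columns relative to $x_k$) this \emph{$j_1$-copy} is carried by every row between $0$ and $i_1$. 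For the flip at column $j_2$ with row $i_2$ the corresponding witness gives that $\sh^{j_2-i_2+1}(x_{k+i_2})$ begins with $\tau$, producing a second \emph{$j_2$-copy}, shifted by $d:=j_2-j_1$ relative to the first and born lower in the array, at row $i_2$.

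Next I would compare the two copies along the columns past $j_2$. At a column $c>j_2$ the copy born at the smaller of $i_1,i_2$ is the one carried by the top row and shows value $\tau_{c-j_{\min}-1}$, while the other copy, sitting at the larger of the two source rows, shows $\tau_{c-j_{\max}-1}$; these agree exactly while $\tau_{s}=\tau_{s+d}$. So I set $s^{*}$ to be the least $s\geq0$ with $\tau_{s+d}\neq\tau_{s}$ and put $j_3:=j_2+1+s^{*}$. At $c=j_3$ the upper value and the lower value differ, so the diagonal indexed by $j_3$ must break; since the rows carrying the upper copy still agree and the lower source row disagrees, the least row $i_3$ with $x^{k}_{j_3}\neq x^{k+i_3}_{j_3-i_3}$ lies strictly between $i_1$ and $i_2$. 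Thus $j_3$ is a flip column relative to $x_k$ whose flip row $i_3$ is between $i_1$ and $i_2$, which is conclusion (a). (If some flip of row strictly between $i_1$ and $i_2$ occurs at a column in $(j_2,j_3)$ I simply take the least such column instead; it already satisfies (a) and lies even closer, so I may assume no such pre-emption happens and the two clean copies survive to $j_3$.)

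The quantitative bound (b) is then the short, clean engine. By the choice of $s^{*}$ we have $\restr{\tau}{s^{*}-1}\app\restr{\sh^{d}(\tau)}{s^{*}-1}$, which is genuine equality because a non-periodic $\tau$ contains no $*$. Remark~\ref{lem:kneading_equiv}(2) forces $d\geq r_{s^{*}-1}$, so by hypothesis~(i), $r_{s^{*}-1}\leq j_2-j_1\leq m_t$. If $s^{*}-1\geq m_{t+1}$ held, then monotonicity of the return times together with the defining inequality $m_{t+1}\leq r_{m_{t+1}}$ would give $r_{s^{*}-1}\geq r_{m_{t+1}}\geq m_{t+1}>m_t$, a contradiction; hence $j_3-j_2-1=s^{*}<m_{t+1}$, which is (b). Hypothesis~(ii), $N_\delta-j_2-1>m_{t+1}$, then guarantees $j_3<N_\delta$, so $j_3$ genuinely falls inside the window on which Lemma~\ref{brentlemma} and the $\delta$ pseudo-orbit give control, certifying it as a bona fide flip column.

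The genuine work, and the step I expect to be the main obstacle, is the geometric bookkeeping underlying the first two paragraphs: certifying that the two $\tau$-copies survive \emph{undisturbed} out to column $j_3$ and that the first diagonal disagreement there is resolved at a row inside $(i_1,i_2)$ rather than higher up, across both relative orderings of $i_1$ and $i_2$. Because the top row is row $0$ it is never altered by a lower flip, so the upper copy persists as long as the relevant column equalities of Lemma~\ref{equalitybetweenflips} do; the delicate point is ruling out a competing flip that breaks the connection between the top row and the witness row before $j_3$. I would handle this by the minimality device above — any interfering flip of row between $i_1$ and $i_2$ is itself an admissible $j_3$, and flips of row outside $[i_1,i_2]$ lie outside the band carrying the two copies — so that the two copies persist exactly up to the column where the return-time inequality bites. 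Once that dichotomy is made precise (together with the routine off-by-one accounting of $s^{*}$ against $N_\delta$ from~(ii)), the combinatorial heart reduces to the single inequality $r_{s^{*}-1}\leq d$ above.
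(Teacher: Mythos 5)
Your overall route is the same as the paper's: both arguments read a copy of $\tau$ off each of the two flips via their precritical witnesses, note that the two copies are offset by $d=j_2-j_1\le m_t$, and then use Remark \ref{lem:kneading_equiv}(2) together with the defining inequalities $m_{t+1}\le r_{m_{t+1}}$ and $r_{m_t}+1<m_{t+1}$ to force a disagreement between the copies (hence a flip) within $m_{t+1}$ columns of $j_2$, with hypothesis (ii) keeping everything inside the $N_\delta$ window. The paper merely phrases this as a contradiction (assume no admissible $j_3$, deduce an agreement of length $M\ge m_{t+1}$ between the two copies, and contradict the return-time bound), whereas you run the same engine forwards by defining $j_3$ at the first disagreement. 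So the organization differs but the mathematical content of the pinch is identical.

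The genuine gap is in the geometric bookkeeping you yourself flag as the main obstacle, and it is exactly what the paper's cases (A), (B), (C) exist to handle: the pseudo-orbit points can themselves be precritical. Your dichotomy covers flips with row strictly inside $(i_1,i_2)$ (pre-emption, take the least such column) and flips with row outside $[i_1,i_2]$ (claimed harmless), but it omits flips whose row is \emph{exactly} $i_1$ or $i_2$. These occur precisely when, say, $x_{k+i_1}$ is precritical with its $*$ at some column $s$ with $j_2<s<j_3$: then the $j_1$-copy of $\tau$ carried in row $i_1$ is cut off at column $s$ and restarts there, so the entry in row $i_1$ at column $j_3$ need not be $\tau_{j_3-j_1}$, the identification of $j_3$ as the first copy-disagreement fails, and the breaking row at column $s$ is $i_1$ itself --- which satisfies neither conclusion (a) nor your pre-emption device. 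The paper disposes of this in case (A) by observing that such an $s$ is a flip column for $x_k$, so under its standing hypothesis either $s-j_2-1\ge m_{t+1}$ or $s>N_\delta$; either way the copy in row $i_1$ is exact on the whole stretch that matters, and only then can the $\app$-agreements be upgraded to equalities of words of $\tau$. Your constructive version needs the analogous step and currently has none. A secondary, fixable slip: your return-time pinch only rules out $s^*-1\ge m_{t+1}$, which gives $j_3-j_2-1\le m_{t+1}$ rather than the strict inequality required in (b); to close this off-by-one you must apply Remark \ref{lem:kneading_equiv}(2) to the full agreement of length $s^*$ (equivalently, track the paper's indexing, in which the agreement begins at $\tau_1$ and the conclusion is $j_2-j_1+1\ge r_{m_{t+1}}$).
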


\begin{proof}
  Suppose we have the points, flip columns and rows as described in (i) and (ii) of the lemma, but suppose that there is no third flip column, $j_3$ satisfying (a) and (b).

  Without loss of generality, assume that $i_1<i_2$.  Then there is some pre-critical point $z_1\in [x_{k+i_i-1}, x_{k+i_1}]$ such that $$\sh(x_{k+i_1-1})\rtr_{N_\delta}\app z_1\rtr_{N_\delta}\app x_{k+i_1}\rtr_{N_\delta},$$ and $\sh^{j_1-i_1}(z_1)=\tau=*\tau_1\tau_2\tau_3\dots$.  Hence $$\sh^{j_1-i_1+1}(x_{k+i_1})\rtr_{N_\delta-j_1}\app \tau_1\tau_2\tau_3\dots\rtr_{N_\delta-j_1}.$$

  Similarly, there is a pre-critical point $z_2\in [x_{k+i_2-1}, x_{k+i_2}]$ such that $$\sh(x_{k+i_2-1})\rtr_{N_\delta}\app z_2\rtr_{N_\delta}\app x_{k+i_2}\rtr_{N_\delta},$$ and $\sh^{j_2-i_2}(z_2)=\tau=*\tau_1\tau_2\tau_3\dots$.  Hence $$\sh^{j_2-i_2+1}(x_{k+i_2})\rtr_{N_\delta-j_2}\app \tau_1\tau_2\tau_3\dots\rtr_{N_\delta-j_2}.$$  We see that $$\tau_{j_2-j_1+1}\tau_{j_2-j_1+2}\dots =\sh^{j_2-j_1+1}(\tau)=\sh^{j_2-i_1+1}(z_1)$$ and $$\sh^{j_2-i_1+1}(z_1)\rtr_{N_\delta-j_2-1}\app \sh^{j_2-i_1+1}(x_{k+i_1})\rtr_{N_{\delta}-j_2-1}.$$ While in the same column we have $$\tau_1\tau_2\dots=\sh^{j_2-i_2+1}(z_2),$$ and $$\sh^{j_2-i_2+1}(z_2)\rtr_{N_\delta-j_2-1}\app \sh^{j_2-i_2+1}(x_{k+i_2})\rtr_{N_{\delta}-j_2-1}.$$  Since we assume that there are no successive flip columns $j_3$ with related flip row $i_3$ where $i_1<i_3<i_2$ and we assume $j_3-j_2-1<m_{t+1}$ and $N_\delta-j_2-1>m_{t+1}$, it must be the case that \begin{align}\label{sublemmaequation}\sh^{j_2-i_1+1}(z_1)\rtr_{M}\app \sh^{j_2-i_1+1}&(x_{k+i_1})\rtr_{M}\app\\ \notag &\sh^{j_2-i_2+1}(x_{k+i_2})\rtr_{M}\app \sh^{j_2-i_2+1}(z_2)\rtr_{M}\end{align} for some $M\in \N$ with $M\ge m_{t+1}$ (otherwise there \emph{would} have been a flip column $j_3$ with flip row $i_3$ satisfying (a) and (b)).  There are three cases to consider.  For instance we could have \begin{enumerate} \item[(A)] $z_1\neq x_{k+i_1}$ and $x_{k+i_1}$ is precritical, or \item[(B)] $z_1=x_{k+i_1}$, or \item[(C)] $z_1\neq x_{k+i_1}$ and $x_{k+i_1}$ is not precritical.\end{enumerate}  In case (A), let $s$ be such that $\sh^{s-i_1}(x_{k+i_1})=\tau$.  Then because $j_1$ and $j_2$ are successive flip columns relative to $x_k$, it must be the case that $s>j_2$.  If $s$ is small enough to still be a flip column for $x_k$ then we know that $s-j_2-1\ge m_{t+1}$ by our assumptions that no flip column for $x_k$ satisfies (a) and (b).  If $s$ is so large that it cannot be counted as a flip column relative to $x_k$ then it must be the case that $s>N_\delta$.  So we again have $s-j_2-1\ge m_{t+1}$.  This implies that the first $M$-length word of $\sh^{j_2-i_1+1}(z_1)$ equals the first $M$ word of $\sh^{j_2-i_1+1}(x_{k+i_1})$.  So we have $$\tau_{j_2-j_1+1}\tau_{j_2-j_1+2}\dots\tau_{j_2-j_1+M}= \sh^{j_2-i_1+1}(x_{k+i_1})\rtr_{M}.$$  Cases (B) and (C) lead to the same conclusion via simpler reasoning.

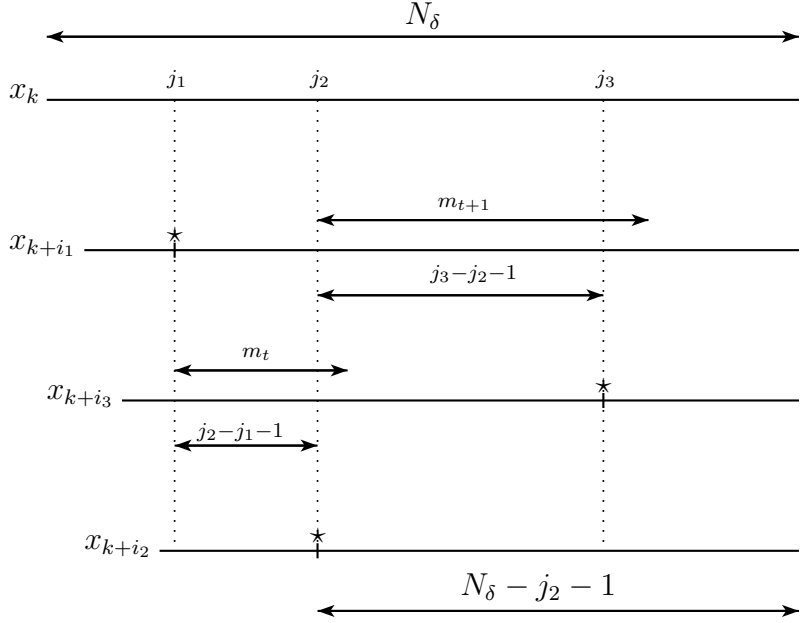
\begin{figure}[ht]
\begin{center}
\psset{xunit=2.0cm,yunit=2.0cm,algebraic=true,dotstyle=o,dotsize=3pt 0,linewidth=0.8pt,arrowsize=3pt 2,arrowinset=0.25}
\begin{pspicture*}(0.5,0.5)(6.5,5)
\psline(1.0,4)(6,4)
\psline(1.25,3)(6,3)
\psline(1.5,2)(6,2)
\psline(1.75,1)(6,1)
\rput[tl](0.75,4.1){\textbf{$x_k$}}
\rput[tl](0.75,3.1){\textbf{$x_{k+i_1}$}}
\rput[tl](1.0,2.1){\textbf{$x_{k+i_3}$}}
\rput[tl](1.25,1.1){\textbf{$x_{k+i_2}$}}
\psline[linestyle=dotted](2.8,4.0)(2.8,1.0)
\psline[linestyle=dotted](4.7,4.0)(4.7,1.0)
\psline[linestyle=dotted](1.85,4)(1.85,1.0)

\psline{->}(2.8,3.2)(5.0,3.2)
\psline{->}(5.0,3.2)(2.8,3.2)

\psline{->}(2.8,1.7)(1.85,1.7)
\psline{->}(1.85,1.7)(2.8,1.7)

\psline{->}(1.0,4.42)(6.0,4.42)
\psline{->}(6.0,4.42)(1.0,4.42)

\psline{->}(2.8,0.6)(6.0,0.6)
\psline{->}(6.0,0.6)(2.8,0.6)

\psline{->}(2.8,2.7)(4.7,2.7)
\psline{->}(4.7,2.7)(2.8,2.7)

\psline{->}(3.0,2.2)(1.85,2.2)
\psline{->}(1.85,2.2)(3.0,2.2)

\rput[tl](3.38,4.65){\textbf{$N_{\delta}$}}
\rput[tl](3.75,0.85){\textbf{$N_{\delta}-j_2-1$}}
\rput[tl](2,1.85){\textbf{$^{j_2-j_1-1}$}}
\rput[tl](2.3,2.35){\textbf{$^{m_t}$}}
\rput[tl](3.6,3.35){\textbf{$^{m_{t+1}}$}}
\rput[tl](1.8,4.2){\textbf{$^{j_1}$}}
\rput[tl](2.75,4.2){\textbf{$^{j_2}$}}
\rput[tl](4.65,4.2){\textbf{$^{j_3}$}}
\rput[tl](3.55,2.9){\textbf{$^{j_3-j_2-1}$}}
\rput[tl](1.8,3.15){$\star$}
\psline(1.85,2.95)(1.85,3.05)
\rput[tl](2.75,1.15){$\star$}
\psline(2.8,0.95)(2.8,1.05)
\rput[tl](4.65,2.15){$\star$}
\psline(4.7,1.95)(4.7,2.05)
\end{pspicture*}
\caption{Return times in the $\delta$ pseudo-orbit of Lemma \ref{nonperiodicsublemma}; `$\star$' represents a flip.}
\end{center}
\end{figure}

  By a similar argument, considering three cases for $z_2$ and $x_{k+i_2}$, we see that $$\tau_1\tau_2\dots \tau_M=\sh^{j_2-i_2+1}(x_{k+i_2}).$$  So by equation (\ref{sublemmaequation}) we see that $$\tau_{j_2-j_1+1}\tau_{j_2-j_1+2}\dots\tau_{j_2-j_1+M}\app \tau_1\tau_2\dots\tau_M.$$  since $\tau$ is non-periodic, none of these symbols can be $*$.  Therefore, $$\tau_{j_2-j_1+1}\tau_{j_2-j_1+2}\dots\tau_{j_2-j_1+M}= \tau_1\tau_2\dots\tau_M.$$  Since $M\ge m_{t+1}$ it then must be the case that $j_2-j_1+1\ge r_{m+1}$, but we assumed that $j_2-j_1-1<m_t$ which implies that $j_2-j_1+1\le m_t+1 \le r_{m_t}+1<m_{t+1}\le r_{m_{t+1}}$, a contradiction.
\end{proof}

We will now define sequences $\{\al_{k}\}_{k\in\N}$, $\{\beta_{k}\}_{k\in\N}$, $\{j_{k}\}_{k\in\N}$, and $\{i_{k}\}_{k\in\N}$, that keep track of the flip columns and rows in a $\delta$ pseudo-orbit as follows.

\begin{defn}\label{def:flipseq}
Let $\eps>0$, and let $N_{\eps}$ be given as in Lemma \ref{brentlemma}.  Let $\delta>0$, and let $\{x_{i}\}_{i\in \N}$ be a $\delta$ pseudo-orbit, with $x_i=x^i_0x^i_1\ldots$ for each $i\in\nat$. Let $\al_{1}$ be minimal such that there exists a $j_{1}<N_{\eps}$ minimal for which either
\begin{enumerate}
	\item there exists $i_{1}\le j_{1}$, minimal, such that $j_{1}$ is a flip column relative to $\al_{1}$ and $i_{1}$ is the flip row of the flip column $j_1$, in other words
	$$x^{\al_{1}}_{j_{1}}\neq x^{\al_{1}+i_{1}}_{j_{1}-i_{1}};$$
	or
	\item we have that
	$$x^{\al_{1}}_{j_{1}}=x^{\al_{1}+i}_{j_{1}-i} = *$$
	for all $1\le i \le j_{1}$ in which case we define $i_{1}=1$.
\end{enumerate}
Let $\beta_{1}=\al_{1}+j_{1}$.  For $n>1$ define $\al_{n}$, $j_{n}$, and $i_{n}$ recursively as above so that $\al_{n}>\beta_{n-1}$. Thus we only start looking for the next point with a flip column in its initial $N_{\eps}$-segment after we have passed the previous flip column.
\end{defn}

These sequences of $\al_k$'s and $\beta_k$'s allow us to keep track of the key flip columns in the $\delta$ pseudo-orbit.  Specifically, in trying to construct a shadowing point for this $\delta$ pseudo-orbit we need to know where the flip columns affect the $N_\eps$ agreement.  Notice that $\sh(x_i)\rtr_{N_\delta}\simeq x_{i+1}\rtr_{N_{\delta}}$, for every $i\in \N$, while our goal is to get a point $z$ which has $\sh^i(z)\rtr_{N_\eps}\simeq x_i\rtr_{N_\eps}$, for all $i\in \N$, with $N_\eps$ much smaller than $N_\delta$.  The $\alpha_k$'s give us the points in the pseudo-orbit that have a flip column somewhere in their first $N_\eps$-many symbols, while the $\beta_k$'s are the actual place where the canonical shadowing point must have a $\diamond$ symbol.

The next result will help us prove that for every $\eps>0$ there is a $\delta_\eps>0$ small enough to guarantee that after the $\diamond$ symbol (in some $\beta_k$th place) in our canonical shadowing point, there is a well-defined string of length $N_\eps$ that agrees with $\tau$. This means that the actual shadowing point will have its $\beta_k$th shift within $\eps$ of the critical point.  This property will allow us to freely choosing $0$ or $1$in place of each $\diamond$, as either choice is allowed in these self-similar dendrite maps.

\begin{prop}\label{canprop}
Let $\eps>0$ and $N_{\eps}\in\N$ be given. Then there is some $\delta_{\eps}>0$ such that for any $0<\delta\le \delta_{\eps}$ and any $\delta$ pseudo-orbit, $\{x_{i}\}_{i\in\N}$ we have
$$
x_{\beta_{k}+t}\rtr_{N_{\eps}-t}\app \sh^{t}(*\tau)\rtr_{N_{\eps}-t}
$$
for all $k\in\nat$ and all $0\le t\le N_{\eps}$.
\end{prop}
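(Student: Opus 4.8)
The plan is to choose $\delta_\eps$ so small that the Lemma~\ref{brentlemma} threshold $N_\delta$ dwarfs $N_\eps$, and then to show that at each flip index $\beta_k$ the pseudo-orbit point $x_{\beta_k}$ lies within $\eps$ of the critical point $*\tau$, with its successors shadowing the critical orbit. First I would unpack the flip recorded by Definition~\ref{def:flipseq}. Writing $\beta_k=\al_k+j_k$, the flip column $j_k<N_\eps$ relative to $x_{\al_k}$ with flip row $i_k$ is realized at the step from $x_{\al_k+i_k-1}$ to $x_{\al_k+i_k}$: there is a precritical point $z$ with
\[
\sh(x_{\al_k+i_k-1})\rtr_{N_\delta}\app z\rtr_{N_\delta}\app x_{\al_k+i_k}\rtr_{N_\delta},\qquad \sh^{j_k-i_k}(z)=*\tau .
\]
Since $\app$ is preserved under the shift at the cost of one symbol per application, shifting $z\app x_{\al_k+i_k}$ forward $j_k-i_k$ times gives
\[
\sh^{j_k-i_k}(x_{\al_k+i_k})\rtr_{N_\delta-(j_k-i_k)}\app *\tau\rtr_{N_\delta-(j_k-i_k)},
\]
and because $j_k<N_\eps$ this is agreement on more than $N_\delta-N_\eps$ symbols.

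Next I would transport this down the array from row $\al_k+i_k$ to row $\beta_k=\al_k+j_k$, a run of $j_k-i_k$ pseudo-orbit steps; reading the diagonal column $j_k$ places $x^{\beta_k}_0$ at its foot. The column equalities between successive flip columns (Lemma~\ref{equalitybetweenflips}), together with $\sh(x_m)\rtr_{N_\delta}\simeq x_{m+1}\rtr_{N_\delta}$, give $\sh^{j_k-i_k}(x_{\al_k+i_k})\rtr_M\simeq x_{\beta_k}\rtr_M$ for a suitable $M$, \emph{provided no further flip column intervenes within the first $M$ symbols of this stretch}. Combined with the previous display this yields the $t=0$ case $x_{\beta_k}\rtr_M\app *\tau\rtr_M$ as soon as $M\ge N_\eps$. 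The cases $t\ge1$ then fall out by inducting on $t$: one more application of the pseudo-orbit relation advances $\sh^{t-1}(*\tau)$ to $\sh^t(*\tau)$ while costing a single symbol, so a flip-free stretch of length $N_\eps$ after $\beta_k$ propagates the agreement all the way to $t=N_\eps$.

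The hard part, and the only place where $\delta_\eps$ is actually pinned down, is guaranteeing that \emph{no disrupting flip occurs within the first $N_\eps$ symbols following $\beta_k$}, i.e.\ that the stretch $M$ above is genuinely $\ge N_\eps$. I would split into the two cases already prepared in the preceding lemmas. If $\tau$ is periodic with period $P$, Lemma~\ref{periodicsublemma} confines every out-of-sync flip column $t_i$ to $N_\delta-t_i<iP$, so choosing $N_\delta$ large compared with $N_\eps+P$ forces all such flips past the first $N_\eps$ symbols after $\beta_k$, while the in-sync flips are harmless for agreement with the $P$-periodic $*\tau$. If $\tau$ is non-periodic and recurrent, I would first invoke Lemma~\ref{lem:return_times} to fix a sequence $\{m_i\}$ with $m_i\le r_{m_i}<r_{m_i}+1<m_{i+1}$ and $m_1>N_\eps$, and then take $\delta_\eps$ small enough that $N_{\delta_\eps}$ exceeds the controlling $m_{t+1}$.

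Lemma~\ref{nonperiodicsublemma} is then the engine: any two flip columns lying closer than $m_t$, with room $N_\delta-j_2-1>m_{t+1}$ beyond them, must sandwich a further flip column whose flip row lies strictly between the given ones; iterating this against the minimality built into Definition~\ref{def:flipseq} and the return-time inequality $j_2-j_1+1\ge r_{m_{t+1}}$ rules out any flip landing within $N_\eps$ of $\beta_k$. I expect the non-periodic case to be the delicate one, since the bookkeeping must track flip \emph{rows} as well as columns and keep straight which $m_t$ governs which gap, whereas the periodic estimate is essentially a counting argument. Once the clean run of length at least $N_\eps$ is secured in both cases, the tracking statement holds for every $0\le t\le N_\eps$ by the induction described above.
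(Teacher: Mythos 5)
Your overall strategy mirrors the paper's: unpack the flip at $\beta_k$ via the responsible precritical point, propagate agreement with $\sh^{t}(*\tau)$ down the array using Lemma~\ref{equalitybetweenflips}, and reduce everything to showing that no further flip can land within the $N_\eps$ symbols following $\beta_k$, with $\delta_\eps$ chosen case-by-case according to the nature of $\tau$. The genuine gap is your case split. You treat ``periodic'' and ``non-periodic recurrent'' as exhaustive, but there is a third case: $\tau$ non-recurrent (and non-periodic), i.e.\ there is an $M$ such that $r_m=\infty$ for all $m\ge M$. This case cannot be absorbed into your recurrent argument: once the return times become infinite, the sequence $\{m_i\}$ with $m_i\le r_{m_i}<r_{m_i}+1<m_{i+1}$ cannot be chosen, and Lemma~\ref{nonperiodicsublemma} --- your ``engine'' --- is stated and proved only for recurrent non-periodic $\tau$. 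This is not a fringe case: strictly preperiodic (Misiurewicz-type) kneading sequences are eventually periodic but not periodic, hence non-recurrent, so it covers precisely the motivating examples of dendritic Julia sets. The paper handles it by a separate and in fact simpler argument: fix $M$ so that the word $\tau_1\dots\tau_M$ never reoccurs in $\tau$, take $N_{\delta_\eps}>2(N_\eps+M+1)$, and observe that a second flip within $N_\eps$ of the first would force $\tau_1\dots\tau_M$ to reoccur inside $\tau$, a contradiction. Your proof needs this third branch added.

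A secondary, quantitative issue: in the recurrent case your prescription ``take $\delta_\eps$ small enough that $N_{\delta_\eps}$ exceeds the controlling $m_{t+1}$'' is too weak to drive the iteration you describe. Each application of Lemma~\ref{nonperiodicsublemma} requires fresh room beyond the newest flip column (its hypothesis (ii)), and the flip columns can advance by as much as $m_{t+q-1}$ at the $q$th step, with up to roughly $2N_\eps$ steps before the available flip rows are exhausted; so $N_{\delta_\eps}$ must dominate the sum $\left(\sum_{q=t}^{t+2N_\eps+1}m_q\right)+2N_\eps+1$, which is the paper's choice, not a single term $m_{t+1}$. Similarly, in the periodic case the threshold must beat the product $2(P+1)N_\eps$ (there can be on the order of $2N_\eps$ out-of-sync flips, each costing $P$), rather than being ``large compared with $N_\eps+P$''. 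These constants are fixable without changing your outline, but as stated your choices of $\delta_\eps$ would not support the counting arguments you invoke.
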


\begin{proof}
  We prove the proposition in three parts.  First we assume that $\tau$ is periodic with period $P$, then we assume that $\tau$ is non-recurrent, and finally we assume that $\tau$ is recurrent but non-periodic.

  To begin, let $\tau$ be periodic with period $P\in \N$.  So $$\tau=*\tau_1\tau_2\dots \tau_{P-1}*\tau_1\dots$$  In this case we choose $\delta_\eps>0$ so that $$N_{\delta_\eps}>2(P+1)N_\eps.$$  Choose $0<\delta\le \delta_\eps$ and $N_\delta\in \N$ via Lemma \ref{brentlemma}.  Let $k\in \N$ and consider $\al_k$.  First we consider the case that $\al_k$ satisfies case (1) of Definition \ref{def:flipseq}.  Let flip column $j_k$ and associated flip row $i_k$ be as defined.  We prove that there are no flips relative to $x_{\al_k+i_k}$ between $j_k-i_k+1$ and $(j_k-i_k+1)+N_\eps$ with flip rows between $1$ and $(\beta_k-i_k)+N_\eps$.  Lemma \ref{equalitybetweenflips} will imply the proposition.

  As in Lemma \ref{periodicsublemma}, let $1\le s_1\le (\beta_k-i_k)+N_\eps$ be the minimal flip row for a flip column, $t_1$, relative to $x_{\al_k+i_k}$ where $t_1-j_k$ is not a multiple of $P$.  Given $s_q$ and $t_q$, define $s_q<s_{q+1}\le (\beta_k-i_k)+N_\eps$ to be the least flip row associated with a flip column, $t_{q+1}$, relative to $x_{\al_k+i_k}$ such that $t_{q+1}<t_q$ and such that $t_{q+1}-j_k$ is not a multiple of $P$.  Then $$N_\delta-t_q<qP$$ for all such $q$.  Let $r$ be the number of flip rows and columns relative to $x_{\al_k+i_k}$ enumerated as above.  Then $r$ is obviously bounded by the number of rows between $i_k$ and $j_k-N_\eps$.  This is maximized when $i_k=1$ and $j_k=N_\eps-1$.  In that case we have $r\le N_\eps-1+N_\eps<2N_\eps$.  Therefore the last flip column in our enumeration, $t_r$ satisfies $$N_\delta-t_r<rP<2N_\eps P.$$  We prove that $t_r>j_k+N_\eps$.  Since $$N_\delta>2PN_\eps+2N_\eps,$$ we have $$t_r>N_\delta-rP\ge N_\delta-2N_\eps P>2N_\eps.$$  Since $j_k<N_\eps$, $$t_r>2N_\eps>j_k+N_\eps.$$  The case that $\al_k$ satisfies (2) of Definition \ref{def:flipseq} is more straightforward.  This completes the proof for the periodic case.

  Now assume that $\tau$ is non-recurrent.  Let $M$ be large enough so that $r_m=\infty$ for all $m\ge M$.  That is to say the word $\tau_1\dots \tau_M$ never re-occurs in $\tau$.  Let $\delta_\eps>0$ be chosen so that $$N_{\delta_\eps}>2(N_\eps+M+1).$$  Let $0<\delta\le \delta_\eps$ and let $N_\delta\in \N$ be defined as in Lemma \ref{brentlemma}.  Let $k\in \N$ and consider $\al_k$.  Suppose that $\al_k$ satisfies (1) of Definition \ref{def:flipseq}.  Let $j_k$ and $i_k$ be the flip column and flip row as defined.  We prove that there are no flips relative to $x_{\al_k+i_k}$ with flip columns between $j_k-i_k+1$ and $(j_k-i_k+1)+N_\eps$ and flip rows between $1$ and $(\beta_k-i_k)+N_\eps$.  Let $z$ be the precritical point that is responsible for the $\al_k$ flip.  That is to say
$$
\sh(x_{\al_k+i_k-1})\rtr_{N_\delta}\approx z\rtr_{N_\delta}\approx x_{\al_k+i_k}\rtr_{N_\delta}
$$
with
$$
\sh^{j_k-i_k+1}(z)=*\tau_1\tau_2\dots.
$$
We have
\begin{align*}
\sh^{j_k-i_k+2}(z)\rtr_{N_\delta-N_\eps-2} &= \tau_1\dots \tau_{N_\eps+2M}\dots \tau_{N_\delta-N_\eps-2}\\
 &= \sh^{j_k-i_k+2}(x_{\al_k+i_k})\rtr_{N_\delta-N_\eps-2}
\end{align*}
because $j_k-i_k<N_\eps$.  Suppose that there is some other flip relative to $x_{\al_k+i_k}$ with flip column between $j_k-i_k+1$ and $(j_k-i_k+1)+N_\eps$.  Let $r$ be minimal such that the flip row for this column occurs at $x_{\al_k+i_k+r}$ and let $t$ be chosen so that the precritical point $z'$ responsible for this flip has
$$
\sh(x_{\al_k+i_k+r-1})\rtr_{N_\delta}\approx z'\rtr_{N_\delta}\approx x_{\al_k+i_k+r}\rtr_{N_\delta}
$$
with
$$
\sh^{t}(z)=*\tau_1\tau_2\dots
$$
and $j_k-i_k-r<t\le (j_k-i_k-r)+N_\eps$.  Since $r$ is the first row with a flip, it must be the case that
\begin{align*}
\sh^{t+1}(z')\rtr_{M} &= \tau_1\dots \tau_M\\
 &= \sh^{(j_k-i_k+1)+t-(j_k-i_k-r)}(z)\rtr_M\\
 &=\sh^{t-(j_k-i_k-r)}(\tau)\rtr_M.
\end{align*}
This contradicts the assumption that $\tau_1\dots \tau_M$ never reoccurs in $\tau$.  The case that $\alpha_k$ satisfies (2) of Definition \ref{def:flipseq} is similar (notationally much simpler).  The proof follows for non-recurrent $\tau$.

This will lead to another precritical point $z'$ which is responsible for the flip.  The fact that the flip occurs between $j_k-i_k+1$ and $(j_k-i_k+1)+N_\eps$ implies that there we must have some $t$ such that $\sh^t(z')=*\tau_1\tau_2\dots$ and such

Next assume that $\tau$ is recurrent but not periodic and we have enumerated the return times as in Lemma \ref{nonperiodicsublemma}.  Specifically, let $\{m_i\}_{i\in \N}$ be a sequence of natural numbers chosen such that
$$
m_i\le r_{m_i}<r_{m_i}+1<m_{i+1}.
$$
Let $t$ be such that $m_t>N_\eps$.  Then in this case define $\delta_\eps>0$ so small that
$$
N_{\delta_\eps}>\left(\sum_{q=t}^{t+2N_\eps+1}m_q \right)+2N_\eps+1.
$$
Let $0<\delta\le \delta_\eps$, and let $\{x_i\}_{i\in \N}$ be a $\delta$ pseudo-orbit.  As in the periodic case of the proof, let $k\in \N$ and consider $\al_k$.  First we consider the case that $\al_k$ satisfies case (1) of Definition \ref{def:flipseq}.  Let flip column $j_k$ and associated flip row $i_k$ be as defined.  We prove that there are no flips relative to $x_{\al_k+i_k}$ between $j_k-i_k$ and $j_k-i_k+N_\eps$ with flip rows between $1$ and $\beta_k+N_\eps-i_k$.  Lemma \ref{equalitybetweenflips} will imply the proposition.

Suppose that there is a flip column relative to $x_{\al_k+i_k}$ in column $t_1$ with $j_k-i_k<t_1<j_k-i_k+N_\eps$.  Choose $t_1$ minimally with respect to this property.  Then we have that $t_1-(j_k-i_k)<N_\eps<m_t$.  So by Lemma \ref{nonperiodicsublemma} there next flip column relative to $x_{\al_k+i_k}$ must occur within $m_{t+1}$ of $t_1$.  Letting that flip column be $t_2$, we have
$$
t_2-t_1-1<m_{t+1}.
$$
Therefore,
$$
t_2-t_1+1\le m_{t+1}+1\le r_{m_{t+1}}+1<m_{t+2}\le r_{m_{t+2}}.
$$
So we can iterate the process where in the $q$th step we get $t_q>t_{q-1}$ is minimal such that $t_q$ is a flip column for $x_{\al_k+i_k}$ and
$$
t_q-t_{q-1}<m_{t+q-1}.
$$
This process must terminate at least by the time we have exhausted the possible flip rows between $x_{\al_k+i_k}$ and $x_{\beta_k+N_\eps}=x_{\al_k+j_k+N_\eps}$.  Let $t_r$ be the last such flip column.  Then $r\le j_k+N_\eps-i_k\le j_k+N_\eps<2N_\eps$.  Therefore $t_r-t_{r-1}<m_{t+2N_\eps-1}$.  This implies that
$$
t_r-j_1=t_r-t_{r-1}+t_{r-1}-\cdots +t_1-j_1
$$
and therefore
$$
t_r-j_1<\sum_{q=t}^{t+2N_\eps-1}m_q.
$$
But $N_\delta-j_1>N_\delta-N_\eps>\left(\sum_{q=t}^{t+2N_\eps} m_q\right)+N_\eps$, so
$$
N_\delta-t_r-1>m_{t+2N\eps}+N_\eps-1>m_{r}.
$$
But in this case we can apply Lemma \ref{nonperiodicsublemma} again to construct a flip row $t_{r+1}$ for $x_{\al_k+i_k}$ with $t_{r+1}-t_r<m_r$.  This contradicts the fact that $t_r$ is the last flip column relative to $x_{\al_k+i_k}$.  Again, the case that $\alpha_k$ satisfies (2) of Definition \ref{def:flipseq} is similar.  The proposition follows.
\end{proof}

\begin{prop}\label{prop:pseudo_agreement}
Let $\eps>0$, with $N_{\eps}\in\N$ as in Lemma \ref{brentlemma} and $\delta_{\eps}>0$ as in Proposition \ref{canprop}. If $\{x_n\}_{n\in\nat}$ is a $\delta_{\eps}$ pseudo-orbit, where for each $n$ we write $x_n=x^n_0x^n_1\ldots$, then for each $n\in\nat$
\[
\restr{x_n}{N_{\eps}} \simeq x^n_0x^{n+1}_0\ldots x^{n+N_{\eps}}_0.
\]
\end{prop}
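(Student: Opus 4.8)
The plan is to read $\restr{x_n}{N_{\eps}}=x^n_0x^n_1\cdots x^n_{N_{\eps}}$ across the top row of the array associated to the pseudo-orbit and to compare it with the diagonal word $w:=x^n_0x^{n+1}_0\cdots x^{n+N_{\eps}}_0$, exhibiting a single precritical point $z'$ with which \emph{both} words agree away from the $*$'s of $z'$; this is exactly what $\restr{x_n}{N_{\eps}}\simeq w$ demands. First I fix $n$ and examine the flip columns relative to $x_n$ lying in $[0,N_{\eps}]$. If there are none then full column equality holds in every column, so $x^n_\ell=x^{n+\ell}_0$ for all $\ell\le N_{\eps}$, the two words are literally equal, and $\simeq$ holds trivially. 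Otherwise let $\ell_1\le N_{\eps}$ be the least flip column relative to $x_n$ and put
\[
z':=x^n_0x^n_1\cdots x^n_{\ell_1-1}\,*\,\tau_1\tau_2\cdots,
\]
so that $\sh^{\ell_1}(z')=*\tau$ and $z'_{\ell_1+t}=(*\tau)_t$ for every $t\ge 0$.

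Next I would control the diagonal using Proposition \ref{canprop}. Since $\{x_{n+m-1}\}_{m\in\N}$ is again a $\delta_{\eps}$ pseudo-orbit whose first term $x_n$ has least flip column $\ell_1$, the sequences of Definition \ref{def:flipseq} for this shifted orbit begin with $\al_1=1$, $j_1=\ell_1$ and $\beta_1=1+\ell_1$, so Proposition \ref{canprop} gives $x_{n+\ell_1+t}\rtr_{N_{\eps}-t}\app\sh^{t}(*\tau)\rtr_{N_{\eps}-t}$ for $0\le t\le N_{\eps}$. Reading the $0$th coordinate yields $w_{\ell_1+t}=x^{n+\ell_1+t}_0=(*\tau)_t=z'_{\ell_1+t}$ at every position where $(*\tau)_t\neq*$. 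Together with the equalities $w_\ell=x^{n+\ell}_0=x^n_\ell=z'_\ell$ for $\ell<\ell_1$ (full column equality below the first flip column), this shows that $w$ agrees with $\restr{z'}{N_{\eps}}$ at every non-$*$ position of $z'$.

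It remains to establish the same for the top row, and here I would use the flip-spacing already built into the choice of $\delta_{\eps}$. By Lemma \ref{periodicsublemma} and the argument in the proof of Proposition \ref{canprop}, every out-of-sync flip column relative to $x_n$ is forced past $N_{\eps}$ when $\tau$ is periodic, while in the non-periodic cases a single flip forbids any further flip for the next $N_{\eps}$ columns. Hence the flip columns relative to $x_n$ inside $[0,N_{\eps}]$ occur only at $\ell_1,\ell_1+P,\ell_1+2P,\dots$ in the periodic case, or at $\ell_1$ alone otherwise, i.e.\ precisely at the $*$-positions of $z'$. Consequently, whenever $z'_\ell\neq*$ the column $\ell$ is not a flip column relative to $x_n$, so full column equality gives $x^n_\ell=x^{n+\ell}_0=w_\ell=z'_\ell$. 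Thus $\restr{x_n}{N_{\eps}}$ also agrees with $\restr{z'}{N_{\eps}}$ off the $*$'s of $z'$, and combining with the previous paragraph gives $\restr{x_n}{N_{\eps}}\approx\restr{z'}{N_{\eps}}\approx w$ (both outer words matching the precritical hub $z'$ away from its stars), whence $\restr{x_n}{N_{\eps}}\simeq w$.

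The one genuinely delicate point, and the step I expect to be the main obstacle, is verifying that $z'$ is an honest precritical point of $\C D_\tau$: one must check that prepending the initial block $x^n_0\cdots x^n_{\ell_1-1}$ of the admissible sequence $x_n$ to the critical point $*\tau$ yields a $(\Lambda,\tau)$-admissible sequence, which I would deduce from the consistency and admissibility of $x_n$ together with the precritical point realising the flip at $\ell_1$, invoking Lemma \ref{lem:precritical_equiv} to absorb the case in which $x_n$ is itself precritical. Everything else is bookkeeping: the real content is that Proposition \ref{canprop} pins down the diagonal (the rows at and below the flip) while the flip-spacing pins down the top row, and these two meet exactly at the $*$'s of the single precritical hub $z'$.
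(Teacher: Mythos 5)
Your proposal is correct in substance and follows the paper's skeleton --- the dichotomy on whether $x_n$ has a flip column in $[0,N_{\eps}]$, Proposition \ref{canprop} to control everything from the flip onward, and a single precritical hub $z'=x^n_0\cdots x^n_{\ell_1-1}*\tau$ witnessing $\simeq$ --- but it routes the top row differently, and the difference is worth spelling out. The paper never argues about \emph{where} later flip columns may lie: it writes the tail of the top row as the shifted tail of $x_{n+i_k-1}$, the last row before the flip (its equation $\restr{\sigma^{j_k}(x_n)}{N_{\eps}-j_k}=\restr{\sigma^{j_k-i_k+1}(x_{n+i_k-1})}{N_{\eps}-j_k}$), and then gets agreement with $*\tau$ directly from the precritical point witnessing the flip, reserving Proposition \ref{canprop} for the diagonal alone. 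You instead push the top row \emph{through} the diagonal, which forces your claim that every flip column relative to $x_n$ in $[0,N_{\eps}]$ sits at a star of $z'$. Note that this claim is not literally what Lemma \ref{periodicsublemma} and the proof of Proposition \ref{canprop} establish: those control flip columns relative to $x_{\al_k+i_k}$, the row just \emph{below} the first flip, hence only flips whose flip row exceeds $i_1$; flips whose flip row is smaller than $i_1$ (disagreements occurring among the rows above the first flip) are not covered, and excluding them at out-of-sync columns requires a separate run of the kneading-sequence argument via Remark \ref{lem:kneading_equiv}. In fairness, the paper's own equation (3) silently needs exactly the same fact (and, at in-sync columns, where such flips genuinely can occur, its ``$=$'' should really be ``$\approx$''), so you are at parity with the paper's level of rigor here; but the paper's detour through $x_{n+i_k-1}$ confines the unaddressed case to rows above the flip, whereas your route needs the claim for all rows, with the rows below the flip covered, as you say, by the internal argument of Proposition \ref{canprop} rather than its statement. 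Two points where your write-up improves on the paper: applying Proposition \ref{canprop} to the shifted pseudo-orbit $\{x_{n+m-1}\}_{m\in\N}$ is cleaner than the paper's appeal to the global sequence $\{\beta_k\}$ (whose $\al_k$ need not align with $x_n$), and flagging the verification that $z'$ is an honest point of $\C D_\tau$ identifies a step the paper omits entirely (it follows from admissibility of $x_n$ together with the fact that prepending symbols preserves admissibility except over the critical value).
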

\begin{proof}
Pick $n\in\nat$. If there is no $k\in\nat$ with $n \leq \beta_k \leq N_{\eps}$ we have that
\[
\restr{x_n}{N_{\eps}} = x^n_0x^{n+1}_0\ldots x^{n+N_{\eps}}_0
\]
and we are done. So assume that there is some minimal $\beta_k$ between $n$ and $N_{\eps}$, which represents the first flip column $j_k$ relative to $x_i$, with flip row $i_k$. By Proposition \ref{canprop},
\begin{align}
x_{\beta_{k}+t}\rtr_{N_{\eps}-t}\app \sh^{t}(*\tau)\rtr_{N_{\eps}-t} \label{eqn:pseudo_agreement1}
\end{align}
for all $0\le t\le N_{\eps}$. Moreover, since $j_k$ and $i_k$ are minimal we have that
\begin{align}
x^n_r = x^{n+r}_0 \label{eqn:pseudo_agreement2}
\end{align}
for $0 \leq r < j_k$, and $x^n_{j_k+s} = x^{n+i_k-1}_{j_k-i_k+s+1}$; in other words that
\begin{align}
\restr{\sigma^{j_k}(x_n)}{N_{\eps}-j_k} = \restr{\sigma^{j_k-i_k+1}(x_{n+i_k-1})}{N_{\eps}-j_k}. \label{eqn:pseudo_agreement3}
\end{align}
Since there is a flip in row $i_k$ relative to $x_n$,
\begin{align*}
\restr{\sigma^{j_k-i_k+1}(x_{n+i_k-1})}{N_{\eps}-j_k} &\app \restr{*\tau}{N_{\eps}-j_k}\\
 &\app x_{\beta_{k}}\rtr_{N_{\eps}-j_k}
\end{align*}
by \eqref{eqn:pseudo_agreement1}. The result follows by \eqref{eqn:pseudo_agreement2} and \eqref{eqn:pseudo_agreement3}.
\end{proof}


\section{Shadowing and $\w$-Limit Sets in Dendrites}

In this section we prove that for a $\Lambda$-acceptable $\tau$, the shift map on the dendrite $\C D_\tau$ has shadowing, and we use this fact to prove the main theorem: a closed set $B\subset {\C D_\tau}$ is an $\w$-limit set of a point if, and only if, $B$ is internally chain transitive (ICT).

Recall the definition of the sequences $\{\alpha_k\}_{k\in\nat}$ and $\{\beta_k\}_{k\in\nat}$ from Definition \ref{def:flipseq}.

\begin{defn}\label{def:canonical_shadow}  Let $\eps>0$, let $N_{\eps}\in\nat$ be given by Lemma \ref{brentlemma} and $\delta_{\eps}>0$ be given by Proposition \ref{canprop}. Let $x_1x_2\ldots$ be a $\delta_{\eps}$ pseudo-orbit (finite or infinite), for each $i$ write $x_i=x^i_0x^i_1\ldots$ and define $\hat{z}=\hat{z}_0\hat{z}_1\ldots$ by
\[
\hat{z}_i= \begin{cases} x^i_0 & \mbox{ if }i\neq\beta_k\mbox{ for any }k\in\nat,\\ \diamond & \mbox{ if }i=\beta_k\mbox{ for some }k\in\nat.
\end{cases}
\]
If the pseudo-orbit is finite with last point $x_n$, let $\hat{z}_{n+i}=x^n_i$ for every $i\geq 0$.
The sequence $\hat{z}$ is called a \emph{canonical $\eps$-shadow} for the $\delta_{\eps}$ pseudo-orbit $\{x_i\}$.
\end{defn}



\begin{thm}\label{thm:canonical_shadowing}
Let $\tau$ be $\Lambda$-acceptable.  Then the shift map on the dendrite $\C D_\tau$ has shadowing.
\end{thm}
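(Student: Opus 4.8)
The plan is to show that the canonical $\eps$-shadow $\hat z$ from Definition \ref{def:canonical_shadow} can be completed to a genuine point $z\in\C D_\tau$ that $\eps$-shadows the given pseudo-orbit. Given $\eps>0$, I would fix $N_\eps$ from Lemma \ref{brentlemma} and take $\delta=\delta_\eps$ from Proposition \ref{canprop}, then start from an arbitrary $\delta_\eps$ pseudo-orbit $\{x_i\}$ and form $\hat z=\hat z_0\hat z_1\ldots$, which agrees symbol-for-symbol with the ``leading diagonal'' $x^i_0$ except at the positions $\beta_k$, where it carries the placeholder $\diamond$. The first task is to replace each $\diamond$ by a symbol from $\{0,1,*\}$ so that the completed sequence $z$ lies in $\C D_\tau$, i.e.\ is $(\Lambda,\tau)$-admissible.

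The key structural input is Proposition \ref{canprop}, which guarantees that for every $k$ we have $x_{\beta_k+t}\rtr_{N_\eps-t}\app\sh^t(*\tau)\rtr_{N_\eps-t}$ for all $0\le t\le N_\eps$; in particular the block of $z$ immediately following a $\beta_k$ position agrees with $\tau$ out to length $N_\eps$, so the $\diamond$ sits exactly where the critical point $*\tau$ has its $*$. This is what lets us freely assign a symbol: because the following $N_\eps$-block looks like $\tau$, choosing either $0$ or $1$ (or, in the degenerate case $\sh^k(\hat z)\app\tau$, the forced value $*$) yields a $(\Lambda,\tau)$-consistent and admissible sequence. Since $\delta_\eps$ is chosen so small that successive $\beta_k$'s are separated by gaps larger than $N_\eps$ (the content of Proposition \ref{canprop}), the assignments at distinct $\diamond$'s do not interfere, and I would verify admissibility locally around each flip position using this gap condition. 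I would then define $z$ by any such legal assignment.

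It remains to check that $z$ $\eps$-shadows the pseudo-orbit, i.e.\ $\sh^n(z)\rtr_{N_\eps}\simeq x_n\rtr_{N_\eps}$ for every $n$, which by Lemma \ref{brentlemma} gives $d(\sh^n(z),x_n)<\eps$. This is precisely the comparison carried out in Proposition \ref{prop:pseudo_agreement}, which establishes $\restr{x_n}{N_\eps}\simeq x^n_0x^{n+1}_0\ldots x^{n+N_\eps}_0$; since $z$ differs from the raw diagonal $x^n_0x^{n+1}_0\ldots$ only at the $\beta_k$ positions, where the $\simeq$ relation already tolerates a flip through the precritical point $*\tau$ supplied by Proposition \ref{canprop}, the shadowing estimate follows by matching $\sh^n(z)\rtr_{N_\eps}$ against this diagonal word and invoking the flip structure described after Lemma \ref{brentlemma}.

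The main obstacle, which is exactly why the earlier machinery was built, is ensuring the $\diamond$ assignments are simultaneously consistent \emph{and} that the flips do not accumulate or overlap within an $N_\eps$-window; this is where the periodic/non-recurrent/recurrent case analysis of Proposition \ref{canprop} does the real work, and the only genuinely new step here is assembling those pieces into a single coherent choice of $z$ and confirming, via Proposition \ref{prop:pseudo_agreement}, that the flip at each $\beta_k$ is absorbed by the $\simeq$ relation rather than producing a true disagreement beyond $N_\eps$.
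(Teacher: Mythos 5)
Your proposal is correct and follows essentially the same route as the paper's proof: form the canonical $\eps$-shadow $\hat z$, complete the $\diamond$'s (freely with $0$ or $1$, or with a forced $*$ when $\sh^{\beta_k}(\hat z)\app\tau$) using Proposition \ref{canprop} to justify membership in $\C D_\tau$, and then conclude $\eps$-shadowing from Proposition \ref{prop:pseudo_agreement} together with Lemma \ref{brentlemma}. The only difference is that you spell out the admissibility check that the paper dismisses as ``clear,'' which is a harmless elaboration, not a change of method.
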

\begin{proof}
For a given $\eps>0$, let $N_{\eps}$ and $\delta_{\eps}$ be as given in Definition \ref{def:canonical_shadow}, let $\{x_i\}_{i\in\nat}$ be a $\delta_{\eps}$ pseudo-orbit, and let $\hat{z}$ be the canonical $\eps$-shadow for $\{x_i\}_{i\in\nat}$.
Let $z\in \{0,1,*\}^\w$ be such that $z_i=\hat{z}_i$ for all $i\neq \beta_k$.  If for all $k\in \N$, $\sh^{\beta_k}(\hat{z})\not\approx \tau$ let $z_{\beta_k}$ be either $0$ or $1$; if instead there is some least $k$ such that $\sh^{\beta_k}(\hat{z})\approx \tau$ then define $z_{\beta_k}=*$ and for all $i>\beta_k$ let $z_i=\tau_{i-\beta_k}$. Defined in this way, it is clear that $z\in{\C D_{\tau}}$.

We show that $d(\sigma^i(z),x_i)<\eps$ for every $i\in\nat$, which by Lemma \ref{brentlemma} is equivalent to saying that
\[
\restr{\sigma^i(z)}{N_{\eps}}\simeq\restr{x_i}{N_{\eps}}.
\]
Pick $i\in\nat$, then by Proposition \ref{prop:pseudo_agreement}
\[
x_{i}\rtr_{N_{\eps}} \simeq x^i_0x^{i+1}_0\ldots x^{i+N_{\eps}}_0
\]
Thus $\restr{\sigma^i(z)}{N_{\eps}}\simeq\restr{x_i}{N_{\eps}}$ by the definition of the canonical shadow, and so $d(\sigma^i(z),x_i)<\eps$.
\end{proof}

We now prove our main theorem, which is the following.

\begin{thm}\label{thm:can_shad_ICT_property}
Let $\tau$ be $\Lambda$-acceptable.  Then $B\subset {\C D_{\tau}}$ is closed and internally chain transitive if, and only if, $B=\omega(z)$ for some $z \in {\C D_{\tau}}$.
\end{thm}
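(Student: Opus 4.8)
The plan is to prove both directions of the equivalence, using the shadowing property established in Theorem \ref{thm:canonical_shadowing} as the main tool.

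\medskip
\noindent\textbf{The easy direction.} First I would dispatch the implication that every $\w$-limit set is closed and ICT. That $\w(z)$ is closed is standard, and that it is ICT follows from the Sarkovskii weak incompressibility argument already cited in the introduction (every $\w$-limit set has weak incompressibility, which is equivalent to internal chain transitivity for compact metric spaces, per \cite{BGOR-preprint} and \cite{Hirsch}). Since $\C D_\tau$ is compact, this direction requires essentially no new work beyond invoking these facts.

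\medskip
\noindent\textbf{The hard direction.} The substantive content is showing that a closed ICT set $B$ is realized as some $\w(z)$. The strategy is to build a single point $z$ whose forward orbit threads through $B$ ever more densely. Fix a sequence $\eps_n\to 0$. Using that $B$ is ICT, choose a finite $\delta_{\eps_n}$ pseudo-orbit (with $\delta_{\eps_n}$ from Proposition \ref{canprop}) that starts and ends appropriately and visits an $\eps_n$-dense subset of $B$; internal chain transitivity guarantees we can move between any prescribed pair of points within $B$ via a pseudo-orbit \emph{lying in $B$}. Concatenating these finite pseudo-orbits yields one long (infinite) $\delta$ pseudo-orbit whose points lie in $B$ and which is $\eps_n$-dense in $B$ on progressively longer blocks. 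The key point is that the allowed $\delta$ can be taken uniform enough (shrinking with $\eps_n$) that the concatenation is a genuine pseudo-orbit. Then I apply Theorem \ref{thm:canonical_shadowing}: there is a point $z$ that $\eps_n$-shadows this pseudo-orbit on the relevant block. Because each block is $\eps_n$-dense in $B$ and the shadowing error is at most $\eps_n\to 0$, every point of $B$ is a limit of points in the forward orbit of $z$, giving $B\subseteq \w(z)$; and because every pseudo-orbit point lies within $\eps_n$ of $B$ while the orbit of $z$ stays within $\eps_n$ of the pseudo-orbit, with $B$ closed we get $\w(z)\subseteq B$. Combining the two inclusions gives $\w(z)=B$.

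\medskip
\noindent\textbf{Main obstacle.} The delicate step is the bookkeeping in the concatenation: I must arrange that the single infinite pseudo-orbit simultaneously (i) remains a $\delta_{\eps_n}$ pseudo-orbit with the \emph{same} $\delta$ that Proposition \ref{canprop} demands for the tolerance $\eps_n$ governing that block, and (ii) becomes $\eps_n$-dense in $B$ on block $n$ with $\eps_n\to 0$. This requires interleaving two competing requirements—the shadowing tolerance (which forces $\delta$ small) and the density requirement (which forces long blocks)—and verifying that the shadowing point produced for the \emph{whole} infinite pseudo-orbit still $\eps_n$-shadows each individual block, not merely an initial segment. The non-Hausdorff setting, where shadowing is complicated by the flip columns analyzed in Section Three, means I cannot simply quote the standard SFT construction; instead the robustness of the canonical shadow from Definition \ref{def:canonical_shadow} must carry through, and I expect to lean on the fact that shadowing in Theorem \ref{thm:canonical_shadowing} was proved for infinite pseudo-orbits with a uniform error bound. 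Once the pseudo-orbit is correctly assembled, both inclusions follow routinely from the definition of $\w(z)$ and compactness of $\C D_\tau$.
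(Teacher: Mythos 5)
There is a genuine gap, and it sits exactly where you flagged it: the passage from shadowing of finite blocks to a single point whose shadowing error \emph{shrinks} along the blocks. Theorem \ref{thm:canonical_shadowing} is a statement of the form ``for every $\eps>0$ there is $\delta_\eps>0$ such that every $\delta_\eps$ pseudo-orbit is $\eps$-shadowed.'' Your concatenated infinite pseudo-orbit is, as a whole, only a $\delta_{\eps_1}$ pseudo-orbit (its quality is that of the worst block), so the theorem hands you a single point $z$ with $d(\sh^i(z),x_i)<\eps_1$ for \emph{all} $i$ --- a uniform, non-shrinking bound. Nothing in the theorem lets you conclude that this same $z$ is $\eps_n$-accurate on block $n$: a point shadowing the whole sequence need not shadow the tails any better than it shadows the beginning, and applying the theorem separately to each tail produces a different point $w_n$ for each $n$ with no way to merge them. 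With only the uniform bound you obtain that $B$ lies in the closed $\eps_1$-neighborhood of $\w(z)$ and $\w(z)$ lies in the closed $\eps_1$-neighborhood of $B$, i.e.\ Hausdorff closeness, not the two inclusions. Note also that if this black-box argument were valid, it would prove for \emph{any} map with shadowing that every closed ICT set is an $\w$-limit set --- precisely the open conjecture of \cite{BDG-tentmaps} quoted in the introduction --- which is a strong sign that plain shadowing cannot be invoked the way you propose.

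The paper closes this gap by not using Theorem \ref{thm:canonical_shadowing} as a black box. It takes a dense sequence $\{x_i\}$ in $B$, ICT chains $\Gamma_i$ from $x_i$ to $x_{i+1}$ of quality $\delta_i$ and length $n_i>N_{1/2^{i-1}}$, and then concatenates not the pseudo-orbits but the \emph{symbolic canonical shadows} $z_i$ of Definition \ref{def:canonical_shadow} (with every $\diamond$ filled by $0$ or $1$, never by $*$), forming $z=z^1_0\dots z^1_{n_1-1}z^2_0z^2_1\dots$. Because each $z_i$ begins with a length-$N_{1/2^{i-1}}$ portion of $x_i$ and ends with the whole of $x_{i+1}$, the shifts of $z$ at the block boundaries $\nu_i$ agree with the corresponding $x_i$ on initial segments of length $N_{1/2^i}$, which is what produces the shrinking errors $d(\sh^{\nu_i}(z),x_i)<1/2^i$ that your construction cannot deliver; the reverse inclusion $\w(z)\subset B$ then uses the overlap structure of consecutive shadows and Lemma \ref{brentlemma}. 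This hands-on construction also creates an obligation your proposal never addresses: one must check that the concatenated sequence is actually a point of ${\C D_\tau}$, i.e.\ is $(\Lambda,\tau)$-admissible. That is not automatic in Baldwin's non-Hausdorff setting, and the paper's argument (if $z\notin{\C D_\tau}$ then some shift of $z$ differs from $\tau$ only where $\tau$ has a $*$, forcing $\tau$ to be periodic and $B$ to be the critical cycle, a contradiction) is a real step. So the correct repair of your plan is not better bookkeeping in the concatenation, but replacing the appeal to the shadowing \emph{theorem} with the explicit canonical-shadow \emph{construction}, and then verifying admissibility and both inclusions by hand.
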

\begin{proof}
$\C D_{\tau}$ has shadowing by Theorem \ref{thm:canonical_shadowing}. For every $i\in\nat$ let $\delta_i$ be the constant given by shadowing such that each $\delta_i$ pseudo-orbit is $1/2^i$-shadowed.

Suppose that $B\subset {\C D_{\tau}}$ is closed and internally chain transitive. If $B$ is finite, to be ICT it must be a cycle and is thus an $\omega$-limit set, so assume that $B$ is infinite; then to be ICT it must contain infinitely many non-precritical points.

By compactness of $B$ let $\{x_i\}_{i\in \N}$ be a dense subset of $B$, then since $B$ is ICT, for every $i\in\nat$ there is a $\delta_i$ pseudo-orbit $\Gamma_i=\{x_i=x_1^i,\ldots,x^i_{n_i}=x_{i+1}\}$, with $n_i>N_{1/2^{i-1}}$ for $N_{1/2^{i-1}}$ given by Lemma \ref{brentlemma}. (If the $\delta_i$ pseudo-orbit between $x_i$ and $x_{i+1}$ has length less than $N_{1/2^{i-1}}$ we can add extra $\delta_i$ pseudo-orbits from $x_{i+1}$ to itself until the inequality is satisfied.)

For each $i\in\nat$, let $z_i=z_0^iz_1^iz_2^i\dots$ be an assignment of the canonical $1/2^i$-shadow for $\Gamma_i$ as constructed in the proof of Theorem \ref{thm:canonical_shadowing}, except that we do not assign a $*$ to any $\diamond$ in any $z_i$. Define $z\in\{0,1\}^{\omega}$ by
\[
z=z_0^1 z_1^1 \ldots z_{n_1-1}^1 z_0^2 z_1^2 \ldots z_{n_2-1}^2 z_0^3 z_1^3 \ldots
\]
and let $\nu_i=\sum_{j\leq i}n_j$. We claim that $z\in{\C D_{\tau}}$ and that $\omega(z)=B$.

To see that $z\in{\C D_{\tau}}$, notice first that since $B$ contains infinitely many non-precritical points, elements of $\{z_i\}_{i\in\nat}$ will contain arbitrarily long initial segments of non-precritical points, and we deduce that $\sigma^k(z)\neq\tau$ for any $k\in\nat$. If $z\notin{\C D_{\tau}}$, then by the definition of ${\C D_{\tau}}$ we must have that for some $k\in\nat$, $\sigma^k(z)$ only differs from $\tau$ in a place where one has a $*$; $z$ has no $*$ so $\tau$ must have a $*$ in this place, and as such is periodic. But then by the construction of $z$, cofinitely many of the $z_i$ would have to be precritical, forcing $B$ to be the critical cycle, a contradiction.

To see that $B\subset\omega(z)$, notice that $\restr{\sigma^{\nu_i}(z)}{N_{1/2^i}}=\restr{z_i}{N_{1/2^i}}$ for every $i$, so
\[
d(\sigma^{\nu_i}(z),x_i)<\frac{1}{2^i}
\]
for every $i\in\nat$ by the definition of the canonical shadow. For $y\in B$ and $\eps>0$, pick $j\in\nat$ so that $\max\{d(x_j,y),1/2^j\}<\eps/2$. Then
\begin{align*}
d(\sigma^{\nu_j}(z),y) &\leq d(\sigma^{\nu_j}(z),x_j) + d(x_j,y)\\
 &< \frac{1}{2^j} + d(x_j,y)\\
 &< \eps/2 + \eps/2,
\end{align*}
so $y\in\omega(z)$.

To see that $\omega(z)\subset B$, notice first that by the definition of the canonical shadow for finite pseudo-orbits, each $z_i$ begins with a length-$N_{1/2^{i-1}}$ portion of $x_i$ and ends with the whole of $x_{i+1}$, so we immediately see that there is an $N_{1/2^{i-1}}$ overlap between $z_i$ and $z_{i+1}$ for every $i\in\nat$.

Suppose for a contradiction that there is some $y\in\omega(z)\setminus B$. Since $B$ is compact there is some $\eta>0$ such that $y\notin B_{\eta}(B)$; pick $J\in\nat$ such that $1/2^J<\eta$. Since $y\in\omega(z)$, for infinitely many $i>J$ we must have that
\begin{align}
\restr{\sigma^{k_i}(z)}{N_{1/2^i}} &\simeq\restr{y}{N_{1/2^i}},\label{doubledragon}
\end{align}
for appropriate integers $k_i$, by Lemma \ref{brentlemma}. Since $y\notin B$, $\restr{y}{N_{\eta}}\not\simeq\restr{x}{N_{\eta}}$ for all $x\in B$. But then by (\ref{doubledragon}), for infinitely many $i>J$,
\[
\restr{\sigma^{k_i}(z)}{N_{\eta}} \not\simeq \restr{x}{N_{\eta}}
\]
for any $x\in B$, meaning that for infinitely many $z_i$,
\[
\restr{\sigma^{j_i}(z_i)}{N_{\eta}} \not\simeq \restr{x}{N_{\eta}}. \]
This contradicts the fact that the $z_i$ $1/2^i$-shadow points in $B$, so $y\in\omega(z)$ and we are done.
\end{proof}

Finally, Theorems \ref{thm:dendrite_conj} and \ref{thm:can_shad_ICT_property} imply the following:

\begin{col}\label{col:ICT_property}
For any quadratic map $f:J_c\rightarrow J_c$ with a dendritic Julia set $J_c$, $\Lambda\subset J_c$ is closed and internally chain transitive if and only if $\Lambda=\omega(z)$ for some $z\in J_c$.
\end{col}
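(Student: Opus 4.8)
The plan is to obtain the corollary as a direct transfer of Theorem \ref{thm:can_shad_ICT_property} across the conjugacy supplied by Theorem \ref{thm:dendrite_conj}. Since $J_c$ is a dendrite, Theorem \ref{thm:dendrite_conj} furnishes a $\Lambda$-acceptable sequence $\tau$ together with a homeomorphism $h\colon J_c\to\C D_\tau$ satisfying $h\circ f_c=\sh\circ h$. First I would record that the three notions in play --- being closed, being internally chain transitive, and being an $\w$-limit set of a point --- are each invariant under topological conjugacy, so that the assertion on $J_c$ is equivalent to the already-established assertion on $\C D_\tau$.

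The only point requiring care is the invariance of internal chain transitivity, since that notion is metric rather than purely topological. Here I would use that both $J_c$ and $\C D_\tau$ are compact, so that $h$ and $h^{-1}$ are uniformly continuous. Given $\eps>0$ and points $a',b'\in h(\Lambda)$, I would choose $\eps'>0$ so that $d(u,v)<\eps'$ forces $d(h(u),h(v))<\eps$; applying $h$ to an $\eps'$ pseudo-orbit in $\Lambda$ from $h^{-1}(a')$ to $h^{-1}(b')$ and invoking $\sh\circ h=h\circ f_c$ produces an $\eps$ pseudo-orbit in $h(\Lambda)$ from $a'$ to $b'$. Running the symmetric argument with $h^{-1}$ then shows that $\Lambda$ is ICT if, and only if, $h(\Lambda)$ is ICT.

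With invariance in hand the corollary is immediate. If $\Lambda\subset J_c$ is closed and ICT, then $h(\Lambda)$ is a closed ICT subset of $\C D_\tau$, so Theorem \ref{thm:can_shad_ICT_property} yields some $w\in\C D_\tau$ with $h(\Lambda)=\w(w)$; setting $z=h^{-1}(w)$ and using the standard identity $h(\w(z))=\w(h(z))$ gives $\Lambda=\w(z)$. Conversely, if $\Lambda=\w(z)$ for some $z\in J_c$, then $h(\Lambda)=\w(h(z))$ is closed and ICT by the reverse implication of Theorem \ref{thm:can_shad_ICT_property}, whence $\Lambda$ is closed and ICT. I do not anticipate any genuine obstacle: the entire substantive content is packaged inside Theorem \ref{thm:can_shad_ICT_property}, and the only diligence required is the uniform-continuity bookkeeping that moves pseudo-orbits and $\w$-limit sets back and forth across $h$.
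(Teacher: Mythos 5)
Your proposal is correct and takes exactly the same route as the paper, which states the corollary as an immediate consequence of Theorems \ref{thm:dendrite_conj} and \ref{thm:can_shad_ICT_property}, leaving the conjugacy-invariance of closedness, internal chain transitivity, and $\omega$-limit sets implicit. Your uniform-continuity argument for transferring pseudo-orbits across $h$ simply makes explicit the routine bookkeeping that the paper omits.
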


\bibliographystyle{plain}
\bibliography{Bibtex_dendrites}

\end{document}